\renewcommand{\em}{\it}
\newtheorem{theorem}{Theorem}[section]
\newtheorem{lemma}[theorem]{Lemma}
\newtheorem{corollary}[theorem]{Corollary}
\newtheorem{proposition}[theorem]{Proposition}
\newtheorem{example}[theorem]{Example}
\newtheorem{remark}[theorem]{Remark}
\newcommand*{\drightleftarrow}[2]{\mathrel{
  \settowidth{\@tempdima}{$\scriptstyle#1$}
  \settowidth{\@tempdimb}{$\scriptstyle#2$}
  \ifdim\@tempdimb>\@tempdima \@tempdima=\@tempdimb\fi
  \mathop{\vcenter{
    \offinterlineskip\ialign{\hbox to\dimexpr\@tempdima+2em{##}\cr
    \rightarrowfill\cr\noalign{\kern.3ex}
    \leftarrowfill\cr}}}\limits^{\!#1}_{\!#2}}}
\newcommand*{\drightarrow}[2]{\mathrel{
  \settowidth{\@tempdima}{$\scriptstyle#1$}
  \settowidth{\@tempdimb}{$\scriptstyle#2$}
  \ifdim\@tempdimb>\@tempdima \@tempdima=\@tempdimb\fi
  \mathop{\vcenter{
    \offinterlineskip\ialign{\hbox to\dimexpr\@tempdima+2em{##}\cr
    \rightarrowfill\cr\noalign{\kern.3ex}
    \rightarrowfill\cr}}}\limits^{\!#1}_{\!#2}}}
\numberwithin{equation}{section}
\title{The max-plus algebra of exponent matrices of tiled orders}
\author[M. Dokuchaev]{Mikhailo Dokuchaev}
\address{Departamento de Matematica Univ. de S\~ao Paulo\\ Caixa
Postal 66281, S\~ao Paulo, SP 05314-970, Brazil}
\email{dokucha@gmail.com}
\author[V. V. Kirichenko]{Vladimir V. Kirichenko}
\address{Faculty of Mechanics and Mathematics, Taras Shevchenko
National Univ. of Kyiv, Volodymyrska Str., 64, 01033 Kyiv, Ukraine}
\email{vv.kirichenko@gmail.com}
\author[G. Kudryavtseva]{Ganna Kudryavtseva}
\address{
Faculty of Civil and Geodetic Engineering, University of Ljubljana, \\ Jamova cesta~2, SI-1000 Ljubljana, Slovenia}
\email{ganna.kudryavtseva@fgg.uni-lj.si}
\author[M. Plakhotnyk ]{Makar Plakhotnyk}
\address{Departamento de Matematica Univ. de S\~ao Paulo\\ Caixa
Postal 66281, S\~ao Paulo, SP 05314-970, Brazil}
\email{makar.plakhotnyk@gmail.com}
 \subjclass[2010]{15A80,16H99,16Z99}
\begin{document}

\begin{abstract}
An exponent matrix is an $n\times n$ matrix $A=(a_{ij})$ over
${\mathbb N}^0$ satisfying (1) $a_{ii}=0$ for all $i=1,\ldots, n$
and (2) $a_{ij}+a_{jk}\geq a_{ik}$ for all pairwise distinct
$i,j,k\in\{1,\dots, n\}$. In the present paper we study the set
${\mathcal E}_n$ of all non-negative $n\times n$
exponent matrices as an algebra with the operations $\oplus$ of
component-wise maximum and $\odot$ of component-wise addition. We
provide a basis of the algebra $({\mathcal E}_n, \oplus, \odot,0)$
and give a row and  a column decompositions of a matrix
$A\in {\mathcal E}_n$ with respect to this basis.
This structure result determines all $n\times n$ tiled orders over
a fixed discrete valuation ring.  We also study automorphisms of
${\mathcal E}_n$ with respect to each of the operations $\oplus$
and $\odot$ and prove that   ${\rm
Aut}(\mathcal{E}_n,\, \odot ) = {\rm
Aut}(\mathcal{E}_n,\, \oplus ) =  {\rm
Aut}(\mathcal{E}_n,\, \odot ,\oplus  ,0) \simeq  {\mathcal{S}}_n \times C_2,$
$n>2.$

\end{abstract}

\maketitle

\section{Introduction}

Orders over domains is a classical object of study, originated by Dedekind's  ideal theory of maximal orders in algebraic number fields.  Apart from  their own interest as a ``noncommutative arithmetic'', orders have   also great importance to the theory of integral representations and to integer matrices \cite{Reiner}. Orders of tiled form appeared as structural ingredients in the study of hereditary orders (see \cite{Harada-1963(2)} or \cite{Reiner}),  Bass orders \cite{DrozdKirRoiter} and, more ge\-ne\-ral\-ly, they are used in the context of quasi-Basss orders in  \cite{DrozdKir1972}. The latter two references  witness the  essential role of tiled orders in the theory of orders of finite representation type, whereas  their importance for the investigation of global dimension stems from Tarsy's paper \cite{Tarsy-1970}.

Various aspects of tiled orders have been extensively studied in the literature. These include homological aspects \cite{ jate1, jate2,Fujita-1990,Fujita-2001,JanOde-1997,KirkKuz-1989, KoenigEtc,RogKirKhiZhu-2001,Rump2},
representation theory \cite{Rump,Simson1,Simson2,Simson3,Zav-Kir1977}, structure  \cite{FujitaSakai,PierceEtc,YangYu,WeideRoggen-1984,Zav-Kir}, $K$-theory \cite{Keating,PengGuo} and others.  In addition,  tiled orders turned out to be useful to prove  Krull-Remak-Schmidt-Azumaya type theorems in additive categories
 \cite{DArnold} and, more recently, a strong connection between  cluster categories and Cohen-Macaulay representation theory of some tiled orders  was established in  \cite{DemonetLuo}.

Notice that   the term ``tiled'' for rings  was used first time by
R. B.  Tarsy \cite{Tarsy-1970} and independently  by D. Eisenbud
and J. C. Robson \cite{EisenRob}. Since then the term ``tiled
oder'' became well established in referring to matrix rings of
``tiled form'' over a domain, however, orders  with tiled
structure over non-commutative rings appeared already in
\cite{DrozdKir1972, DrozdKirRoiter,  Keating, Zav-Kir}, and the
more general concept of a tiled ring was defined in
\cite{DokKirPolMi}. Nevertheless, tiled orders sometimes appear in
the literature under other names, such as   {\em Schurian orders}
\cite{WeideRoggen-1984} or {\em monomial orders}  \cite{YangYu}.

The current paper is concerned with {\em exponent matrices} of tiled orders \cite[Chapter  14]{HGK-1} which play a crucial role in characterization of  these orders.  Exponent matrices  are  $n\times n$   matrices over non-negative integers satisfying:

\begin{enumerate}[(EM1)]
\vspace{-0.05cm}
\item $a_{ii}=0$ for all $i=1,\dots, n$.
\item $a_{ij}+a_{jk}\geq a_{ik}$ for all pairwise distinct $i,j,k\in\{1,\dots, n\}$.
\end{enumerate}
\vspace{-0.01cm}
Of course (EM2) is non-vacuous only starting with $n=3$. As the definition suggests, exponent matrices are objects with a strong combinatorial flavour. Throughout the paper, the set of all exponent $n\times n$ matrices over ${\mathbb N}^0 ={\mathbb N}\cup \{0\}$ is denoted by ${\mathcal E}_n$.

The main idea of the present paper is to look at ${\mathcal E}_n$ as an algebra with respect to operations of component-wise maximum, denoted by $\oplus $ and sometimes called the tropical sum,  and component-wise addition, denoted by $\odot$ and sometimes called the tropical product.
Most of usual axioms of an idempotent semiring hold in the algebra $({\mathcal E}_n, \oplus, \odot, 0)$ where $0$ denotes the zero matrix: both of the operations $\oplus$ and $\odot$ are associative and commutative,  $\oplus$ is idempotent and $\odot$ distributes over $\oplus$. Observe, however, that in our algebra the neutral elements for both of the operations coincide: this is the zero matrix $0$.

The equational theory of the algebra $({\mathbb N}^0, \odot, \oplus, 0)$ was studied in  \cite{AEI, AEI1}. According to J.-E. Pin \cite{Pin} the adjective ``tropical'', in relation to a max-plus (or a min-plus) algebra,  was coined  by Dominique Perrin in honor of the pioneering work of  Imre Simon (1943-2009),  a mathematician and computer scientist from University of S\~ao Paulo, who was first to use min-plus semirings in theoretical computer science.  Namely,
 these semirings are crucial ingredients of I. Simon's solution of some famous decidability problems on rational languages, treating them from the point of view of
 Burnside type questions \cite{Simon1978,Simon1988} (see also \cite{MandelSimon} and  \cite{Simon1994}).

In the current paper we give a basis for the  max-plus algebra  $({\mathcal{E}}_n, \odot, \oplus, 0)$ and also study the  symmetry of  ${\mathcal{E}}_n$ from various points of view.
We now describe this basis. Let $I\subseteq \{1,2,\dots, n\}$ be a  proper subset, which means that $1\leq |I|\leq n-1$. We let $I^c=\{1,2,\dots, n\}\setminus I$ be the complement of $I$. By $T_{I}=(t_{ij})$ we denote the matrix given by
$$
t_{ij}=\left\lbrace\begin{array}{ll} 1, & i\in I, j\in I^c;\\
0, & \text{otherwise.}\end{array}\right.
$$
Let ${\mathcal T}$ be the set of all matrices $T_I$ where $I\subseteq  \{1,2,\dots, n\}$ and $1\leq |I| \leq n-1$. We sometimes call the elements of ${\mathcal T}$ {\em blocks}. It is easy to see that ${\mathcal T}\subseteq  {\mathcal{E}}_n$. We can now state our structure result.

\begin{theorem}[Structure Theorem] \label{th:structure_th}The matrices $T_I$, where $I$ runs through the proper subsets of  the set $\{1,2,\dots, n\}$, form a basis of the algebra $({\mathcal{E}}_n, \odot, \oplus, 0)$. That is, any matrix $A\in {\mathcal{E}}_n$ can be written in the form
\begin{equation}\label{eq:anja}
A=B_1\odot\ldots \odot B_l \oplus \ldots \oplus C_1\odot\ldots \odot C_m,
\end{equation}
where all the matrices $B_1,\dots, C_m$ are blocks (as usual $\odot$ is performed prior to $\oplus$). Moreover, this basis is the only minimal basis of the algebra $({\mathcal{E}}_n, \odot, \oplus, 0)$.
\end{theorem}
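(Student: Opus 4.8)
The plan is to prove the three assertions in turn: (a) every $A\in{\mathcal E}_n$ has a representation of the form~\eqref{eq:anja} in the blocks $T_I$; (b) the set ${\mathcal T}$ is a \emph{minimal} generating set, i.e.\ no proper subset of ${\mathcal T}$ generates ${\mathcal E}_n$ under $\odot,\oplus$; and (c) ${\mathcal T}$ is the \emph{unique} minimal generating set. For (a) I would argue by giving an explicit decomposition. The natural building blocks for the $\odot$-part are the matrices $T_{\{i\}}$ and $T_{\{i\}^c}$: note that $k\cdot T_{\{i\}}$ is the matrix with $k$ in row $i$ off the diagonal and $0$ elsewhere, while $k\cdot T_{\{i\}^c}$ has $k$ in column $i$ off the diagonal and $0$ elsewhere. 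More usefully, for a fixed row index $i$, the ``row vector'' $r_i(A)=(a_{i1},\dots,a_{in})$ can be built as a $\odot$-combination (tropical sum of scalar multiples) of the blocks $T_I$ with $i\in I$, using the entries of that row as coefficients; the exponent inequality (EM2) restricted to triples through $i$ is exactly what makes such a combination land inside ${\mathcal E}_n$ with the correct row. Symmetrically there is a column decomposition using blocks $T_I$ with $i\in I^c$. Then $A=\bigoplus_{i} (\text{row-}i\text{ piece})$ recovers $A$ as a component-wise maximum, since each piece agrees with $A$ in row $i$ and is $\le A$ elsewhere. (The excerpt's abstract promises exactly such a row and column decomposition, so I would cite it once available.) The key identity to verify is that $\bigoplus_{I\ni i} c_I\, T_I$, for suitable coefficients $c_I$ read off from row $i$, equals the ``staircase'' matrix whose row $i$ is $r_i(A)$ and whose other entries are forced minimally; this is a short computation using $a_{ij}+a_{jk}\ge a_{ik}$.

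For the minimality statement (b), the strategy is to show that each $T_I$ is \emph{indecomposable}: it cannot be written in the form~\eqref{eq:anja} using only blocks $T_J$ with $J\ne I$. The natural invariant to use is a notion of ``support'' or ``shape'': the zero-set $Z(A)=\{(i,j):a_{ij}=0\}$. One checks that $Z$ behaves well under the operations — $Z(A\odot B)=Z(A)\cap Z(B)$ and $Z(A\oplus B)=Z(A)\cap Z(B)$ — so for \emph{any} expression of the form~\eqref{eq:anja} the zero-set of the result is a union (over the $\oplus$-terms) of intersections (over the $\odot$-factors) of the zero-sets of the blocks used, hence in particular $Z(A)\supseteq \bigcap$ over the factors of one $\odot$-term that is $\subseteq Z(A)$. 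Now $Z(T_I)=\{(i,j):\text{not }(i\in I,\ j\in I^c)\}$, and two distinct proper subsets give distinct zero-sets; moreover if $T_I$ were expressible through other blocks, one of the $\odot$-terms would have zero-set containing $Z(T_I)$, forcing all its factors $T_J$ to satisfy $Z(T_J)\supseteq Z(T_I)$, i.e.\ $\{(i,j):i\in I,\ j\in I^c\}\supseteq\{(i,j):i\in J,\ j\in J^c\}$ — but combined with the fact that the only $0/1$ matrix with a prescribed set of positions equal to $1$ and obtained as $\oplus$ of such $T_J$'s is $T_I$ itself (the $1$-entries of $T_I$ cannot be realized without some factor being exactly $T_I$), one gets a contradiction. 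I would formalize this by first reducing to $0/1$ matrices: since $T_I$ has entries in $\{0,1\}$ and all coefficients in a minimal representation may be taken in $\{0,1\}$ (larger coefficients only enlarge entries, which $\oplus$ can only partially undo), the whole question becomes one about the lattice generated by ${\mathcal T}$ inside the $0/1$ exponent matrices, where $\odot$ and $\oplus$ both act as intersection/union of $1$-sets.

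For uniqueness (c), suppose ${\mathcal B}$ is any minimal generating set. Each element of ${\mathcal B}$, being in ${\mathcal E}_n$, decomposes as in~\eqref{eq:anja} over ${\mathcal T}$; conversely each $T_I$, being indecomposable by (b), must appear — essentially verbatim — among the generators needed to build it, so ${\mathcal T}\subseteq$ (the set of indecomposable elements reachable from ${\mathcal B}$), and by minimality ${\mathcal B}\subseteq{\mathcal T}$, whence ${\mathcal B}={\mathcal T}$. Making ``must appear verbatim'' precise is where the indecomposability invariant from (b) does the real work: an indecomposable element of ${\mathcal E}_n$ with $Z$-set equal to $Z(T_I)$ is forced to be $T_I$, because among matrices $A$ with $Z(A)=Z(T_I)$ the smallest in the entrywise order is $T_I$ and any strictly larger one is $\oplus$-reducible.

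I expect the main obstacle to be step (a), and within it the bookkeeping of \emph{which} blocks and \emph{which} coefficients realize a given row (resp.\ column): one must check simultaneously that the proposed $\odot$-combination has the right entries in the distinguished row/column and that it does not exceed $A$ elsewhere, and the latter is precisely where the exponent inequality (EM2) is consumed. The indecomposability argument in (b)–(c) is conceptually the crux but technically lighter, provided the reduction to $0/1$ matrices and the zero-set calculus $Z(A\odot B)=Z(A\oplus B)=Z(A)\cap Z(B)$ are set up cleanly at the outset.
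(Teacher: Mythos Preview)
Your plan for (a) matches the paper's: for each nonzero row index $p$ one builds a $\odot$-product $T(p)$ of blocks whose $p$th row equals the $p$th row of $A$ and which is $\leq A$ everywhere else by (EM2), and then $A=\bigoplus_p T(p)$. Two small slips: you write $\bigoplus_{I\ni i} c_I\,T_I$ where you clearly mean a $\odot$-combination (ordinary sum), and your claim that on $0/1$ matrices ``$\odot$ and $\oplus$ both act as intersection/union of $1$-sets'' is false for $\odot$, since $1+1=2$ leaves the $0/1$ world. The correct and simpler observation is that both $\odot$ and $\oplus$ are monotone increasing in each argument, so every generator occurring in an expression is $\leq$ the result.

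The genuine gap is in (c). Your argument for (b) shows only that $T_I$ cannot be written in the form \eqref{eq:anja} using \emph{blocks} $T_J$ with $J\neq I$ (your support inclusion $J\times J^c\subseteq I\times I^c\Rightarrow J=I$ is correct and is exactly the incomparability of blocks). But for uniqueness you must rule out expressions in an \emph{arbitrary} generating set ${\mathcal B}\subseteq{\mathcal E}_n$, whose members need not be blocks. Your patch --- ``among matrices $A$ with $Z(A)=Z(T_I)$ the smallest is $T_I$ and any strictly larger one is $\oplus$-reducible'' --- is false: $2\,T_{\{1\}}$ has the same zero-set as $T_{\{1\}}$ yet is not $\oplus$-reducible (every $B\leq 2T_{\{1\}}$ in ${\mathcal E}_n$ is of the form $k\,T_{\{1\}}$, since (EM2) forces all off-diagonal entries of row~$1$ to be equal when all other rows vanish). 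What is actually needed, and what the paper proves, is that the blocks are precisely the \emph{atoms} of $({\mathcal E}_n,\leq)$: every nonzero $A\in{\mathcal E}_n$ satisfies $T_J\leq A$ for some proper $J$. The paper gives a one-line direct proof (take $J=\{j:a_{ij}=0\}$ for any $i$ with nonzero row); it also follows instantly from your own (a), since $A=\bigoplus_j\bigodot_k T_{J_{jk}}$ forces $A\geq T_{J_{11}}$. Once you have this, uniqueness is immediate: any expression for $T_I$ over ${\mathcal B}$ gives $T_I\geq B$ for some nonzero $B\in{\mathcal B}$, then $B\geq T_J$ for some block, whence $T_I\geq T_J$, so $J=I$ and $B=T_I\in{\mathcal B}$.
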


Theorem~\ref{th:structure_th} is proved in Section~\ref{sec:proof_structure}.
Notice that this result 
gives a way to obtain all tiled orders over a fixed  discrete valuation ring  from a simply described set of exponent matrices (see~\cite[pp.~352-353]{HGK-1}). 
In Section~\ref{sec:aut_odot} we study the automorphisms of the
semigroup $(\mathcal{E}_n, \odot)$ and prove in
Theorem~\ref{th:aut_odot} that ${\mathrm{Aut}}(\mathcal{E}_n,
\odot) \simeq {\mathcal{S}}_n \times C_2$, if $n\geq 3,$ where
${\mathcal{S}}_n$ stands for the symmetric group on $n$ letters
and $C_2$ denotes the cyclic group of order $2$. In order to study the automorphisms of $({\mathcal{E}}_n, \oplus )$ we need some technical preparation which is done
in the first part of Section~\ref{sec:aut_oplus}, 
considering {\em strict downsets} $A^{\Downarrow}=\{B\in {\mathcal
E}_n\colon B\lneq A\}$ of elements  $A\in {\mathcal E}_n. $   In
Theorem \ref{th:downset} we prove that a matrix $A\in {\mathcal
E}_n\setminus {\mathcal T}$ is uniquely determined by its strict
downset. This result looks interesting by itself, but it is also
used in the
proof of Theorem \ref{th:aut_oplus} which states  that
${\mathrm{Aut}}(\mathcal{E}_n, \oplus) \simeq
{\mathcal{S}}_n\times C_2$. It follows that for $n>2$ we have
$${\rm Aut}(\mathcal{E}_n,\, \odot ) = {\rm Aut}(\mathcal{E}_n,\,
\oplus ) = {\rm Aut} (\mathcal{E}_n,\, \leq ) =  {\rm
Aut}(\mathcal{E}_n,\, \odot ,\oplus  ,0) \cong  {\mathcal{S}}_n
\times C_2, \;(n>2),$$ which reflects some harmony between   the
various structures on $\mathcal{E}_n $. The latter demonstrates
some kind of a symmetry which exists in  the class of the $n
\times n$-tiled orders over a fixed discrete valuation ring.

\section{Proof of the Structure Theorem}\label{sec:proof_structure}

For $A\in {\mathcal E}_n$ and $k\geq 1$ by $A^{\odot k}$ we denote the matrix $A \odot  \ldots  \odot  A$ where the number of factors $A$ is $k$.  We also define the partial ordering on ${\mathcal E}_n$ by $A\leq B$ if and only if $A\oplus B=B$. This is equivalent to the condition $a_{ij}\leq b_{ij}$ for all $1\leq i,j\leq n$.

We assume that $A\neq 0$. Fix $p\in\{1,\dots, n\}$ such that the $p$th row of $A$ is non-zero. Let
 $C$ be the set of numbers in ${\mathbb N}\cup \{0\}$ which occur in the $p$th row of $A$. We note that $0 \in C$ since $a_{pp}=0$. Let $r$ be the maximal element of $C$. Thus $|C|\geq 2$.

 For each $c\in C$ we  define the following sets of indices:
\begin{equation}\label{eq:ind}
 J_{c}=\{j\colon a_{pj}=c\}.
\end{equation}
 We order the elements of $C$ assuming that
 $$
 C=\{c_1,\dots, c_m\} \text{ where } c_1< c_2 <\ldots < c_{m}.
 $$
Observe that $c_1=0$ and $c_m=r$.

Further, for each $t\in \{1,\dots, m-1\}$ we put
\begin{equation}\label{eq:it}
I_t=J_{c_1}\cup \ldots \cup J_{c_t} \text{ and } k_t=c_{t+1}-c_t
\end{equation}
and
\begin{equation}\label{eq:tpq}
T(p)=T_{I_1}^{\odot k_1}\odot \ldots \odot T_{I_{m-1}}^{\odot k_{m-1}}.
\end{equation}

Let us prove that

\begin{equation}\label{eq:h}
T(p) \leq A.
\end{equation}

Let $i,j\in \{1,2,\dots, n\}$. We need to show that $T(p)_{ij}\leq a_{ij}$.
The construction of the sets $J_t$ implies that
$$J_{c_1}\cup\ldots \cup J_{c_m}=\{1,2,\dots, n\}.$$
Since, in addition, the above union is  disjoint, there are unique sets $J_{c_s}$  and $J_{c_v}$ such that
$i\in J_{c_s}$ and $j\in J_{c_v} $.

For each $t=1,\dots, m-1$ we note that the block $T_{I_t}$ has $1$  precisely at positions with indices $ij$ where $i\in J_{c_1}\cup \ldots \cup J_{c_t}$ and $j\in J_{c_{t+1}}\cup \ldots \cup J_{c_m}$. It follows that if $s\geq v$  then $T(p)_{ij}=0\leq a_{ij}$. Assume now that $s<v$. Then the matrices $T_{I_s}, T_{I_{s+1}},\dots, T_{I_{v-1}}$ have $1$ at $i,j$ position and all the other matrices $T_{I_t}$ have $0$ at the same position. It follows that
$$
T(p)_{ij}=k_s+\ldots +k_{v-1}=c_v-c_s.
$$
It remains to show that $a_{ij}\geq c_v-c_s$. By  condition (R2) in the definition of ${\mathcal E}_n$ we have the inequality
\begin{equation}\label{eq:a1}
a_{pi}+a_{ij}\geq a_{pj}.
\end{equation}
From $i\in J_{c_s}$ and $j\in J_{c_v}$ we have that $a_{pi}=c_s$ and $a_{pj}=c_v$ by  \eqref{eq:ind}. This and the inequality \eqref{eq:a1} yield that $c_s+a_{ij}\geq c_v$ so that $a_{ij}\geq c_v-c_s$, as required.

We now show that the $p$th row of $T(p)$ equals the $p$th row of $A$:
\begin{equation}\label{eq:h1}
T(p)_{pj}=a_{pj} \text{ for all } j\in\{1,\dots, n\}.
\end{equation}
Indeed, let $j\in\{1,\dots, n\}$. Assume that $j\in J_{c_t}$. Notice that $p \in J_1 = I_1 \subset I_2 \subset \ldots,$ as $a_{pp}=c_1=0.$  From the construction of the matrix $T(p)$ we have
$$
T(p)_{pj}=k_1+\ldots + k_{t-1 }= c_t-c_0=c_t.
$$
But $j\in J_{c_t}$ is equivalent to $a_{pj}=c_t$, so that \eqref{eq:h1} follows.

From \eqref{eq:h} and  \eqref{eq:h1} we immediately obtain

\begin{equation}\label{eq:rcd}
A=\bigoplus \{T(p)\colon p\text{th row of } A \text{ is non-zero}\},
\end{equation}
which finishes the proof of the fact that the matrices $T_I$ form a basis of the algebra $({\mathcal{E}}_n, \odot, \oplus, 0)$.

We are left to prove the claim about minimality. Let $A$ be a non-zero matrix from ${\mathcal E}_n$  and assume  that the index $i$ is such that $a_{ij}\neq 0$ for some $j$. Let $C_i=\{j\colon a_{ij}= 0\}$. Thus $j\not\in C_i$. We show that $T_{C_i}\leq A$.
If $C_i=\{i\}$, then $T_{\{i\}}\leq A$. Otherwise, let $t\in C_i$, $t\neq i$ and let $k\in C_i^c$. Since $a_{it}+a_{tk}\geq  a_{ik}\geq 1$ and since $a_{it}=0$, it follows that $a_{tk}\geq 1$. This implies $T_{C_i}\leq A$, as desired. So for any non-zero $A\in {\mathcal E}_n$ there is some block matrix, which is less then or equal to $A$. The statement about of the minimality of the basis of block matrices now follows from the fact that any two block matrices are incomparable with respect to $\leq$.

\begin{remark}\label{rem:zero-one}
{\em Assume that all elements of the matrix $A$ are zeros and
ones. Then for each $p$ such that the $p$th row of $A$ is non-zero
we have that the matrix $T(p)$, as in  \eqref{eq:tpq}, equals
$T_{I_1}$ (since $m=2$ and $k_1=1$). It follows that the row
decomposition \eqref{eq:rcd} of $A$ in this case does not involve
the operation $\odot$, and thus $A$ is an $\oplus$-combination of
matrices from~${\mathcal T}$.}
\end{remark}

\begin{remark}
{\em The construction of the matrix $T(p)$ is carried over as
follows. The set $I_1$  is the smallest subset of $\{1,\dots, n\}$
such that the $p$th row (and thus any row)  of $T_{I_1}$ is
less than or equal to  the $p$th row of $A$ and $k_1$
is the maximal power of $T_{I_1}$ such that $T_{I_1}^{\odot
k_1}\leq A$. Then to construct $T_{I_2}$ we find the smallest
subset $I_2$ of $\{1,\dots, n\}$ such that the $p$th row of
$T_{I_1}^{\odot k_1}\odot T_{I_2}$ is less  than or
equal to the $p$th row of $A$ and we let $k_2$ be the greatest power
of $T_{I_2}$  such that the $p$th row of $T_{I_1}^{\odot k_1}\odot
T_{I_2}^{\odot k_2}$ is less than or equal to  the
$p$th row of $A$. We construct the subsequent blocks
$T_{I_t}$ and their powers $k_t$ similarly.}
\end{remark}

We emphasize that  not only we have  proved our theorem but also we have suggested an explicit construction of a decomposition of the form \eqref{eq:anja} which has no more than $n$ summands for every  matrix $A\in {\mathcal E}_n$.

We provide an example of the calculation of the matrix $T(p)$.

\begin{example}
{\em  Let $n=9$ and let $A\in {\mathcal E}_9$ be a matrix whose
first row equals}

$$
\begin{pmatrix}
0 & 5 & 0 & 0 & 1 & 3 & 3 & 3 & 5
\end{pmatrix}.
$$
{\em We construct the matrix $T(1)$. Firstly, we have that
$C=\{0,1,3,5\}$ is the set of all elements which occur in the
given row. Now, we calculate the sets $J_c$ for all  $c\in C$:}
$$
J_0=\{1,3,4\}, \, J_1=\{5\}, J_3=\{6,7,8\}, J_5=\{2,9\}.
$$

{\em Further, for each $t=1,\dots, |C|-1=3$ we define the set
$I_t$ and the number $k_t$ according to \eqref{eq:it}:}
$$
I_1=J_0=\{1,3,4\}, \,\, I_2=J_0\cup J_1=\{1,3,4,5\}, \,\, I_3=J_0\cup J_1\cup J_3=\{1,3,4,5,6,7,8\};
$$
$$
k_1=1-0=1, \,\, k_2=3-1=2, \,\, k_3=5-3=2.
$$
{\em Following \eqref{eq:tpq}, we obtain}
$$
T(1)=T_{\{1,3,4\}}\odot T_{\{1,3,4,5\}}^{\odot 2}\odot T_{\{1,3,4,5,6,7,8\}}^{\odot 2}.
$$
\end{example}

We call the decomposition \eqref{eq:rcd}  the {\em row decomposition of} the matrix $A$.
We now introduce the notion of the column decomposition of the matrix $A$. Let
$$
A^t=\bigoplus \{T(p)\colon p\text{th column of } A \text{ is non-zero}\},
$$
be the row decomposition of the transpose $A^t$ of the matrix $A$.

Since clearly the operation $\oplus$ commutes with taking the transpose, transposing the latter equality we obtain
$$
A=\bigoplus \{T(p)^t\colon p\text{th column of } A \text{ is non-zero}\}.
$$
Note that for a block $T_I$ its transpose $T_I^t$ is the block $T_{I^c}$, where by $I^c$ we denote the complement
$\{1,\dots, n\}\setminus I$. Since the operation  $\odot$  also commutes with taking the transpose, we can readily calculate the transpose of each summand $T(p)$. If
$$T(p)=T_{I_1}^{\odot k_1}\odot \ldots \odot T_{I_{m-1}}^{\odot k_{m-1}}$$
then we put
$$
S(p)=T(p)^t=T_{I_1^c}^{\odot k_1}\odot \ldots \odot T_{I_{m-1}^c}^{\odot k_{m-1}}.
$$
We call the decomposition
\begin{equation}\label{eq:ccd}
A=\bigoplus \{S(p)\colon p\text{th column of } A \text{ is non-zero}\}.
\end{equation}
the {\em column decomposition} of $A$.

The technique we have developed so far may be effectively used to verify if a given $n\times n$ matrix over ${\mathbb N}\cup\{0\}$ belongs to ${\mathcal E}_n$. Firstly, for any such  a matrix we can calculate the matrices $T(p)$ and $S(p)$ using our constructions.

\begin{proposition} Let $A$ be an $n\times n$ matrix over ${\mathbb N}\cup\{0\}$. The following statements are equivalent:
\begin{enumerate}
\item $A\in {\mathcal E}_n$.
\item $T(p)\leq A$ for every non-zero row of $A$.
\item $S(p)\leq A$ for every non-zero column of $A$.
\end{enumerate}
\end{proposition}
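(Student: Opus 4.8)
The plan is to get the two implications starting from~(1) almost for free from Section~\ref{sec:proof_structure}, and then to reduce the two converse implications to the single statement $(2)\Rightarrow(1)$. For $(1)\Rightarrow(2)$ there is nothing new to do: this is exactly inequality~\eqref{eq:h}, proved there for every $A\in{\mathcal E}_n$ and every non-zero row~$p$. For $(1)\Rightarrow(3)$ I would use transposition: conditions~(EM1) and~(EM2) are visibly preserved by $A\mapsto A^t$, so $A\in{\mathcal E}_n$ iff $A^t\in{\mathcal E}_n$; applying $(1)\Rightarrow(2)$ to $A^t$, transposing the resulting inequalities, and using $T_I^{\,t}=T_{I^c}$ together with the fact that $\oplus$ and $\odot$ commute with transposition, one gets precisely $S(p)\le A$ for every non-zero column~$p$, which is~(3). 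The same observation shows that statement~(3) for $A$ is literally statement~(2) for $A^t$; hence, once $(2)\Rightarrow(1)$ is available, $(3)\Rightarrow(1)$ follows by applying it to $A^t$ and transposing back. So everything reduces to proving $(2)\Rightarrow(1)$.

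For $(2)\Rightarrow(1)$, assume $T(p)\le A$ for every non-zero row~$p$ and verify~(EM1) and~(EM2). The diagonal condition~(EM1) I would handle separately; the only delicate point is that the construction of $T(p)$ in Section~\ref{sec:proof_structure} was written under the standing hypothesis $a_{pp}=0$, so I would re-run it without that hypothesis to check that a non-zero diagonal entry is always detected by some $T(p)\not\le A$. For~(EM2), fix pairwise distinct $i,j,k$. If the $i$th row of $A$ is zero, then $a_{ik}=0\le a_{ij}+a_{jk}$ and there is nothing to prove, so assume it is non-zero and use $T(i)\le A$. The key observation is that the description of the entries of $T(p)$ obtained in Section~\ref{sec:proof_structure} is purely combinatorial in the $p$th row and never used $A\in{\mathcal E}_n$: writing $i\in J_{c_a}$, $j\in J_{c_b}$, $k\in J_{c_c}$ for the classes of the $i$th row, one has $T(i)_{jk}=c_c-c_b$ when $c_b<c_c$ and $T(i)_{jk}=0$ otherwise. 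If $a_{ij}\ge a_{ik}$ then $a_{ij}+a_{jk}\ge a_{ij}\ge a_{ik}$, so~(EM2) holds trivially; if $a_{ij}<a_{ik}$ then $c_b<c_c$, and $T(i)\le A$ gives $a_{jk}\ge T(i)_{jk}=c_c-c_b=a_{ik}-a_{ij}$, which is exactly~(EM2).

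The real content, and the step I expect to be the main obstacle, is this implication $(2)\Rightarrow(1)$: one must see that the single family of lower bounds $T(p)\le A$, which a priori only bounds $A$ from below, already forces all the triangle inequalities~(EM2). The mechanism that makes it work is the one in the previous paragraph --- the $jk$-entry of $T(i)$ records exactly the deficit $a_{ik}-a_{ij}$ whenever $a_{ij}<a_{ik}$ --- so the genuine work is (a) to confirm that the combinatorial formula for the entries of $T(p)$ from Section~\ref{sec:proof_structure} carries over verbatim to a matrix not yet known to lie in ${\mathcal E}_n$ (it does, since that computation used only the partition of $\{1,\dots,n\}$ induced by the $p$th row), and (b) the careful treatment of the diagonal condition~(EM1) flagged above.
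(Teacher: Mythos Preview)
Your approach is essentially the paper's: $(1)\Rightarrow(2)$ is exactly~\eqref{eq:h}; $(1)\Leftrightarrow(3)$ is obtained from $(1)\Leftrightarrow(2)$ via transposition; and for $(2)\Rightarrow(1)$ you reverse the computation in Section~\ref{sec:proof_structure}, observing that the inequality $T(p)_{ij}\le a_{ij}$ derived there is \emph{equivalent} to the instance $a_{pi}+a_{ij}\ge a_{pj}$ of~(EM2) whenever $a_{pi}<a_{pj}$, so that $T(p)\le A$ for all non-zero rows~$p$ yields~(EM2). This is precisely what the paper does.

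One correction on the point you flag. You are right that the construction of $T(p)$ was written under the hypothesis $a_{pp}=0$, but your plan to ``re-run it without that hypothesis to check that a non-zero diagonal entry is always detected by some $T(p)\not\le A$'' cannot succeed. Take the $n\times n$ matrix with every entry equal to~$1$: each row has a single value, so $|C|=1$, the product in~\eqref{eq:tpq} is empty, $T(p)=0\le A$ for every $p$, and condition~(2) holds although (EM1) fails. The paper simply does not address (EM1) in its proof --- it shows only that (2) forces all the inequalities~(EM2) --- so the Proposition is to be read with the zero-diagonal condition as a standing assumption. Under that reading your argument is complete; your extra step for (EM1) is unnecessary and, as written, would not go through.
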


\begin{proof} The implication (1) $\Rightarrow$ (2) was shown in the proof of Theorem \ref{th:structure_th}.
For the converse implication, we observe that the corresponding part of the proof of Theorem \ref{th:structure_th} shows that  also $T(p)\leq A$ implies that that
all the inequalities (R2)  of the form $a_{pi}+a_{ij}\geq a_{pj}$ hold.
The equivalence (1) $\Leftrightarrow (3)$ follows from (1) $\Leftrightarrow
(2)$ and the observation that $A\in {\mathcal E}_n$ implies that $A^t\in {\mathcal E}_n$. The remaining equivalence now also follows.
\end{proof}

We now provide an example of the calculation of the row decomposition and the column decomposition of a matrix $A\in {\mathcal E}_n$.

\begin{example}{\em Let}
$
A=\left(\begin{array}{cccc}0 & 2 & 5 & 5 \\
4 & 0 & 3 & 3 \\
6 & 2 & 0 & 2 \\
4 & 4 & 2 & 0\end{array}\right). $ {\em The row decomposition of
$A$ is}
\begin{multline*}
A= T(1)\oplus T(2)\oplus T(3)\oplus T(4)  =\\
(T_{\{1\}}^{\odot 2}\odot T_{\{1,2\}}^{\odot 3}) \oplus (T_{\{2\}}^{\odot 3}\odot T_{\{2,3,4\}}) \oplus
 (T_{\{3\}}^{\odot 2}\odot T_{ \{2,3,4\}}^{\odot 4}) \oplus (T_{\{4\}}^{\odot 2}\odot T_{\{3,4\}}^{\odot 2}).
\end{multline*}

{\em The column decomposition of $A$ is}
\begin{multline*}
A= S(1)\oplus S(2)\oplus S(3)\oplus S(4) =\\
(T_{\{2,3,4\}}^{\odot 4}\odot T_{\{3\}}^{\odot 2}) \oplus (T_{\{1,3,4\}}^{\odot 2}\odot T_{\{4\}}^{\odot 2}) \oplus
 (T_{\{1,2,4\}}^{\odot 2}\odot T_{\{1,2\}}\odot T_{\{1\}}^{\odot 2}) \oplus (T_{\{1,2,3\}}^{\odot 2}\odot T_{\{1,2\}}\odot T_{\{1\}}^{\odot 2}).
\end{multline*}
{\em From $T(1),T(2),T(3),T(4)\leq A$ we see that we indeed have
$A\in {\mathcal E}_4$.}
\end{example}

\section{Automorphisms of $({\mathcal E}_n, \odot)$}\label{sec:aut_odot}

  In this section, we study automorphisms of the semigroup $({\mathcal E}_n, \odot)=({\mathcal E}_n, +)$. We denote by $e_{ij}$ the matrix whose entry at  position $i,j$ equals $1$ and all the other entries are $0$'s.

    We begin by observing that
if $A\in {\mathcal E}_n$ then $A^t \in {\mathcal E}_n$, too. We thus have an action of the two-element group $C_2=\{e,a\}$ on $({\mathcal E}_n, \oplus, \odot, 0)$ where $e$ is the identity map, and $a$ acts by $a\cdot A= A^t$. Furthermore, let $\sigma\in {\mathcal{S}}_n$ and $A=(a_{ij})\in {\mathcal E}_n$. We put $\sigma\cdot A =  (a_{\sigma(i)\sigma(j)})$. We observe that $\sigma\cdot A\in {\mathcal E}_n$ and that we have an action of ${\mathcal{S}}_n$ on $({\mathcal E}_n, \oplus, \odot, 0)$. It is clear that this action commutes with the action of $C_2$,  and we obtain an action by automorphisms of the group $C_2\times {\mathcal{S}}_n$ on $({\mathcal E}_n, \oplus, \odot, 0)$.   This action is   faithful if $n>2,$ since  $\sigma \cdot  T_{\{i\}} =  T_{\{\sigma (i)\}} \neq T_{\{i\}^c}=T_{\{i\}}^t,$ for any $\sigma \in {\mathcal S}_n$ and $i=1, \ldots, n.$ As to the case $n=2,$ the action of the unique nontrivial permutation in ${\mathcal{S}}_2$ coincides with the transpose.

In the case $n=2$ we easily have that $ {\rm Aut}(\mathcal{E}_2) = C_2.$ Indeed, observe that  any non-negative $2\times 2$-matrix  whose diagonal entries are $0$'s is exponent   and   the unique minimal generating set of $\mathcal{E}_2 =(\mathcal{E}_2,\, \odot)$ is $\{ e_{12},\, e_{21}\}$. Then any automorphism of $\mathcal{E}_2$ preserves  $\{ e_{12},\, e_{21}\}$, and consequently, a non-trivial automorphism maps $e_{12} \mapsto e_{21}$ and $e_{21}\mapsto  e_{12}.$

For $n>2$ we prove that any automorphism of $({\mathcal E}_n, \odot)$ belongs to $C_2\times {\mathcal{S}}_n$:

\begin{theorem}\label{th:aut_odot} Let $\varphi$ be an automorphism of $({\mathcal E}_n, \odot)$ where $n>2$. Then $\varphi \in C_2\times {\mathcal{S}}_n$.
\end{theorem}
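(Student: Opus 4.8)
The plan is to show that any automorphism $\varphi$ of the semigroup $(\mathcal{E}_n, \odot)$ must, after composing with a suitable element of $\mathcal{S}_n \times C_2$, be the identity. The natural strategy is to exploit the structure provided by Theorem~\ref{th:structure_th}: the blocks $\mathcal{T}$ form the unique minimal generating set of $(\mathcal{E}_n, \odot, \oplus, 0)$ as an algebra — but here we only have the semigroup operation $\odot$ at our disposal, not $\oplus$. So the first task is to recover the relevant distinguished elements of $\mathcal{E}_n$ purely from the multiplicative structure. I would first identify which matrices are $\odot$-indecomposable (cannot be written as $B \odot C$ with both $B, C \neq 0$); these are exactly the matrices all of whose rows are ``primitive'' in an appropriate sense, and in particular they include the matrices $e_{ij}$ ($i \neq j$) and, more importantly, I expect the blocks $T_I$ to be among them or at least recoverable. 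Actually the cleanest route: the atoms of the poset $(\mathcal{E}_n, \leq)$ — where recall $A \leq B \iff A \odot$-absorbs appropriately, but $\leq$ is not definable from $\odot$ alone either.

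So instead I would work directly with $\odot$-irreducibles. First I would characterize the set $\{e_{ij} : i \neq j\}$ intrinsically in $(\mathcal{E}_n, \odot)$: these are precisely the nonzero elements $A$ such that $A$ is not of the form $B \odot C$ with $B, C$ both nonzero, AND such that $A$ is ``minimal'' among such — concretely, $e_{ij}$ is $\odot$-irreducible and has the property that for every nonzero $A$ with no proper $\odot$-factorization, there is a chain, or better: $e_{ij}$ is characterized as an element $A \neq 0$ with $A \odot A \neq 0$ lying in the ``first layer'' and such that the set $\{X : A \odot X = A \odot X\}$... — this is getting complicated. The honest approach is: the set $\mathcal{M}$ of $\odot$-irreducible elements is automorphism-invariant. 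Inside $\mathcal{M}$, I would single out $\{e_{ij}\}$ as the irreducibles $A$ for which $A^{\odot k}$ is still irreducible for all $k$ fails — no. Let me reconsider: $e_{ij}^{\odot 2} = 2e_{ij}$ which IS still $\odot$-indecomposable (you cannot factor $2e_{ij}$ nontrivially since any factor must be supported on position $(ij)$ and have a zero — but $2e_{ij} = e_{ij} \odot e_{ij}$, so it IS decomposable). Good: so $e_{ij}$ is characterized as the $\odot$-indecomposable elements $A$ such that $A = B \odot C$ with $B, C$ indecomposable forces $\{B,C\} = \{A\}$... no, $2e_{ij}$ is indecomposable into indecomposables only trivially. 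Rather, $e_{ij}$ are the \emph{minimal} nonzero elements, and minimality is detectable: $A$ is minimal iff $A \neq 0$ and there is no nonzero $B$ with $B \odot C = A$ for some $C$ with $B \neq A$ — equivalently $A$ has exactly one nonzero entry equal to $1$.

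Having pinned down $\{e_{ij} : i\neq j\}$ as an invariant set, the next step is to recover the incidence structure: I would show that $\varphi$ induces a permutation of $\{e_{ij}\}$, hence of the set of ordered pairs $(i,j)$ with $i \neq j$, and that this permutation must respect the ``composability'' relation $e_{ij} \odot$-compatible with $e_{jk}$ in the sense that $e_{ij} \oplus e_{jk}$ lies below a block — but again $\oplus$ is unavailable. Instead, $e_{ij} + e_{jk} + e_{ik}$ versus $e_{ij}+e_{jk}$: the point is that $e_{ij} \odot e_{jk}$ has support $\{(ij),(jk)\}$ while, say, $e_{ij}+e_{ik}$ also has support of size $2$ but the two ``chain'' pairs behave differently under taking further $\odot$-powers and adding. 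The key relation I would extract: call $(i,j)$ and $(k,l)$ \emph{linked} if $e_{ij} \odot e_{kl}$ can be ``completed'' — precisely, $\{j\} = \{k\}$ is detectable because then $e_{ij} + e_{jl} \geq$ nothing... Here I should use that $e_{ij} + e_{jk}$ together with the triangle matrix $e_{ij}+e_{jk}+e_{ik}$ both lie in $\mathcal{E}_n$ but $e_{ij} + e_{jk} + 2e_{ik}$ does not — whereas $e_{ij}+e_{kl}+2e_{mn}$ for non-chain-related disjoint pairs can be in $\mathcal{E}_n$. This gives a purely $\odot$-and-membership criterion for ``$j$ is the common index'' of $e_{ij}$ and $e_{jk}$, hence recovers the ternary incidence and thus the underlying set $\{1,\dots,n\}$ up to the action of $\mathcal{S}_n$; the ambiguity between $(i,j)$ and $(j,i)$ across all pairs simultaneously is exactly the transpose, giving the $C_2$.

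The main obstacle I expect is the middle step: reconstructing the combinatorial incidence (``which pairs share an index, and in which slot'') using \emph{only} the operation $\odot$ and the ambient set $\mathcal{E}_n$, without access to $\oplus$ or $\leq$. One has to find the right auxiliary elements — products and sums of a few $e_{ij}$'s and $T_I$'s — whose membership or non-membership in $\mathcal{E}_n$ distinguishes chain-configurations from non-chain ones, and verify these are genuinely $\odot$-definable (invariant under $\varphi$). Once the permutation $\pi$ of $\{1,\dots,n\}$ (possibly composed with transpose) is extracted so that $\varphi(e_{ij}) = e_{\pi(i)\pi(j)}$ for all $i \neq j$, the conclusion is routine: composing $\varphi$ with the corresponding element of $\mathcal{S}_n \times C_2$ gives an automorphism fixing every $e_{ij}$; since every $A \in \mathcal{E}_n$ satisfies $A = \bigodot$-expressions? no — but $A = \sum a_{ij} e_{ij}$ only as an $\oplus$/$\odot$ combination in the full algebra. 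For the semigroup alone one finishes by noting that an automorphism fixing all $e_{ij}$ fixes all their $\odot$-products $\prod e_{i_k j_k}^{\odot c_k}$, and every $A \in \mathcal{E}_n$ with all entries recording a genuine matrix is such a product precisely when... here one invokes that $A = T(p_1) \oplus \cdots$ but each $T(p)$ is a $\odot$-product of blocks $T_I$, and each $T_I = \bigodot$? No: $T_I$ is \emph{not} a $\odot$-product of $e_{ij}$'s since $T_I$ has many entries. So the very last step also needs care: one shows $\varphi$ fixes each block $T_I$ (e.g. because $T_I$ is $\odot$-irreducible and uniquely determined among irreducibles by the sizes of its factorizations together with which $e_{ij}$-products it bounds below — detectable via $T_I \odot X = $ matrices with a zero pattern), and then that the blocks together with $\odot$ generate enough, or more simply argue entrywise using that $\varphi$ is additive on each ``position'' once it fixes $e_{ij}$ and $T_{\{i\}}$, and the $T_{\{i\}}$ linearly span all row-constant adjustments. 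I would present the cleanest available version of this endgame, accepting a short case analysis.
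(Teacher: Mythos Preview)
Your plan has a fatal flaw at the very first substantive step: the matrices $e_{ij}$ do \emph{not} belong to $\mathcal{E}_n$ for $n\geq 3$. Indeed, if $k\neq i,j$ then $a_{ik}+a_{kj}=0<1=a_{ij}$, so (EM2) fails. Hence there are no ``single-entry'' elements inside $\mathcal{E}_n$ for $\varphi$ to permute, and every subsequent step in your outline (characterizing the $e_{ij}$ as minimal or $\odot$-irreducible elements, reading off the incidence relation between them, etc.) is operating on objects that simply are not present in the semigroup. Relatedly, your proposed ``membership-in-$\mathcal{E}_n$'' tests could never distinguish anything: $\mathcal{E}_n$ is closed under $\odot$, so any $\odot$-product of elements of $\mathcal{E}_n$ is automatically in $\mathcal{E}_n$.

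The paper's proof confronts exactly this difficulty. The genuine minimal (and $\odot$-irreducible) elements of $\mathcal{E}_n$ are the blocks $T_I$, and the argument first isolates the extremal blocks $T_{\{i\}}$ and $T_{\{i\}^c}$ by an invariant of $\odot$-irreducibles (the sum of entries, shown to be $\varphi$-invariant via $\varphi(U)=U$ and an $\mathcal{S}_n$-orbit argument). The role of your $e_{ij}$ is played by the triple $T_{\{i\}}$, $T_{\{j\}^c}$, and $A_{ij}:=T_{\{i\}}\oplus T_{\{j\}^c}$, all of which \emph{do} lie in $\mathcal{E}_n$; the point is the identity $e_{ij}=T_{\{i\}}+T_{\{j\}^c}-A_{ij}$ in the ambient matrix monoid $M_n^+$. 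Once one proves $\varphi(T_{\{i\}})=T_{\{\sigma(i)\}}$, $\varphi(T_{\{j\}^c})=T_{\{\sigma(j)\}^c}$, and $\varphi(A_{ij})=A_{\sigma(i)\sigma(j)}$ (or the transposed versions), one extends $\varphi$ to an automorphism of $M_n^+$ sending $e_{ij}\mapsto e_{\sigma(i)\sigma(j)}$, and the conclusion follows. The hard work is showing $\varphi$ respects these three families coherently, which the paper does through a sequence of counting and sum-of-entries arguments on $\odot$-irreducibles with $\#(A)=n-1$ and $\#(A)=2n-3$. Your instinct to look at $\odot$-irreducibles is right, but the correct irreducibles to target are $T_{\{i\}}$, $T_{\{i\}^c}$, and $A_{ij}$, not $e_{ij}$.
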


So assume for the rest of the section that $n>2$ and let $\varphi\colon A\mapsto \varphi(A)$ be an automorphism of  $({\mathcal E}_n, \odot)$.
Let us introduce some notation.
We put
$$
{\mathcal L}=\{T_{\{i\}}\colon 1\leq i\leq n\}, \,\, {\mathcal C}=\{T_{\{i\}^c}\colon 1\leq i\leq n\}.
$$
For each ordered pair $(i,j)$ where $i\neq j$ and $i,j\in \{1,\dots, n\}$ we put
$$
A_{ij}=T_{\{i\}}\oplus T_{\{j\}^c},\,\,
L_{ij}= T_{\{i,j\}}\oplus T_{\{j\}}, \,\, C_{ij }=T_{\{i,j\}^c}\oplus T_{\{i\}^c}.
$$
Let, further,
${\mathcal A}$, ${\tilde{\mathcal L}}$ and ${\tilde{\mathcal C}}$ be the sets consisting  of all matrices $A_{ij}$, $L_{ij}$ and $C_{ij}$, respectively.

We say that $A\in {\mathcal E}_n$ is $\odot$-irreducible, if $A$ can not be decomposed as $A=B\odot C=B+C$ where $B,C\neq A$.

\begin{lemma}\label{lem:irreducible} The matrices $A_{ij}$, $L_{ij}$ and $C_{ij}$ are $\odot$-irreducible for all ordered pairs $(i, j)$  where $i\neq j$ and $i,j\in \{1,\dots, n\}$.
\end{lemma}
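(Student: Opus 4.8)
The plan is to unwind the explicit form of each of the three families of matrices and show that in every case a putative nontrivial factorization $X = B \odot C = B + C$ with $B, C \neq X$ is impossible, by examining the entries one by one. Recall that $X \leq B$ and $X \leq C$ entrywise, and wherever $X$ has a zero entry, both $B$ and $C$ must have a zero there. So the strategy is: identify the zero pattern of $X$, deduce that $B$ and $C$ are supported inside the support of $X$, and then use the constraint $b_{ij} + c_{ij} = x_{ij}$ together with membership in $\mathcal{E}_n$ (in particular conditions (EM1) and (EM2)) to force $B = X$ or $C = X$.

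First I would treat $A_{ij} = T_{\{i\}} \oplus T_{\{j\}^c}$. Writing out $T_{\{i\}}$ (ones along row $i$ off the diagonal) and $T_{\{j\}^c}$ (ones in column $j$ off the diagonal, in all rows $\neq j$), one sees $A_{ij}$ has entries equal to $1$ exactly at positions in row $i$ (except $(i,i)$) and positions in column $j$ except $(j,j)$; in particular $(A_{ij})_{ij}$ could be $1$ (it is, since $i \neq j$), and all entries are $0$ or $1$. If $A_{ij} = B + C$ with $B, C \leq A_{ij}$, then $B$ and $C$ are themselves $0$-$1$ matrices supported in the support of $A_{ij}$, and at each position where $A_{ij}$ has a $1$, exactly one of $B, C$ has the $1$. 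The point is then to show that any partition of the support of $A_{ij}$ into two nonempty pieces, each of which is the support of an exponent matrix, is impossible unless one piece is empty — here I would use that the "elbow" shape (row $i$ together with column $j$) cannot be split: if, say, $B$ contains the entry $(i,k)$ for some $k \neq i, j$ but not $(i,j)$, then condition (EM2) applied to the triple $i, j, k$ or $k, i, j$ inside $B$ forces the missing entry, a contradiction; iterating pins down $B = A_{ij}$. The same kind of argument, tracing which entries (EM2) forces once one entry is present, handles the cases $L_{ij} = T_{\{i,j\}} \oplus T_{\{j\}}$ and $C_{ij} = T_{\{i,j\}^c} \oplus T_{\{i\}^c}$; by the transpose symmetry ($C_{ij}$ is essentially the transpose of an $L$-type matrix, since $T_I^t = T_{I^c}$), it suffices to do $A_{ij}$ and $L_{ij}$ in detail and obtain $C_{ij}$ for free.

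For $L_{ij} = T_{\{i,j\}} \oplus T_{\{j\}}$: the block $T_{\{i,j\}}$ has ones in rows $i$ and $j$, columns outside $\{i,j\}$; the block $T_{\{j\}}$ has ones in row $j$, columns $\neq j$. Superimposing, $L_{ij}$ has ones in row $i$ at columns $\notin\{i,j\}$, and in row $j$ at all columns $\neq j$ (including column $i$), and zeros elsewhere. Again this is a $0$-$1$ matrix, so a factorization amounts to a partition of the support into two exponent-supported pieces. I would argue: whichever piece $B$ contains the entry $(j,i)$ must, by (EM2) on triples through intermediate indices $k \notin \{i,j\}$, contain the entries $(j,k)$ and $(i,k)$ as well, and conversely a piece containing some $(i,k)$ or $(j,k)$ is forced to contain the others and $(j,i)$; hence the support is indecomposable and one piece is all of $L_{ij}$.

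The main obstacle I anticipate is organizing the case analysis of the (EM2) implications cleanly: one must be careful that the triples $(i,j,k)$ used always consist of pairwise distinct indices (so the argument genuinely needs $n \geq 3$, which is assumed), and one must check that no "escape route" exists where the support splits because some forced entry happens to lie outside the support of $X$ — but since $B, C \leq X$ this cannot happen, which is exactly why the argument closes. Bookkeeping the small number of exceptional columns/rows (the diagonal, column $j$, row $i$) is the only delicate point; the logic itself is a short forcing argument repeated three times.
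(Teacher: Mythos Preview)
Your approach is sound and takes a genuinely different route from the paper. (One slip: you write ``$X\leq B$ and $X\leq C$'', but you clearly mean $B\leq X$ and $C\leq X$, which is what the rest of your argument uses.) The paper instead invokes the Structure Theorem: since $B,C\leq A_{ij}$, every block occurring in a $(\odot,\oplus)$-decomposition of $B$ or of $C$ lies below $A_{ij}$ and hence is $T_{\{i\}}$ or $T_{\{j\}^c}$; if one factor uses both blocks it already equals $A_{ij}$, and otherwise distributivity produces a summand $\geq T_{\{i\}}\odot T_{\{j\}^c}$ in $B\odot C$, whose $(i,j)$-entry is $2$. Your argument is more elementary, working directly with (EM2) on the $0$--$1$ support and not touching the Structure Theorem; the transpose observation reducing $C_{ij}$ to $L_{ij}$ is a nice economy.

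Two of your forcing claims are slightly overstated, though both are easily repaired within your framework. For $A_{ij}$, the iteration does not show that a nonzero $B$ equals $A_{ij}$; what (EM2) gives is only that any nonzero $B\in\mathcal E_n$ with $B\leq A_{ij}$ satisfies $b_{ij}=1$ (e.g.\ from $b_{ik}=1$ and $b_{ij}+b_{jk}\geq b_{ik}$ together with $b_{jk}=0$). But that already suffices: the same applied to $C$ gives $b_{ij}+c_{ij}=2\neq 1$. For $L_{ij}$, from $b_{ji}=1$ you correctly obtain $b_{jk}=1$ for all $k\notin\{i,j\}$ (via $b_{jk}+b_{ki}\geq b_{ji}$ and $b_{ki}=0$), but you cannot directly deduce $b_{ik}=1$. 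The contradiction closes instead by passing to $C$: row $j$ of $C$ is now zero, so $C\neq 0$ forces some $c_{ik}=1$ with $k\notin\{i,j\}$, and then (EM2) on $(i,j,k)$ in $C$ gives $c_{jk}\geq 1$, which is impossible.
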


\begin{proof} We prove the claim for $i=1$ and $j=2$, the general case follows applying some $\sigma\in {\mathcal{S}}_n$ satisfying $\sigma(1)=i$ and $\sigma(2)=j$. Assume that $A_{12}=B\odot C$ where $B,C\in {\mathcal E}_n$, $B,C\neq A$. By Theorem \ref{th:structure_th} both $B$ and $C$ can be decomposed as $(\odot, \oplus)$-expressions in blocks. Since, clearly, all the blocks $T$ in this decomposition satisfy $T\leq B,C\leq A_{12}$, it follows that we must have $T=T_{\{1\}}$ or $T=T_{\{2\}^c}$. Furthermore, both $T_{\{1\}}$ and $T_{\{2\}^c}$ must appear in the decompositions of $B$ and $C$ (at least once in the two decompositions). Also, if $T_{\{1\}}$ does not appear in one of the decompositions, say of $B$, then it must appear in the other decomposition, as otherwise $A_{ij}=B\odot C$ can not hold. But then, applying distributivity, it follows that $T_{\{1\}}\otimes T_{\{2\}^c}$ is a summand of a decomposition of $A_{12}$ which is impossible because the $1,2$-entry of $T_{\{1\}}\otimes T_{\{2\}^c}$ equals $2$, while the $1,2$-entry of $A_{12}$ is~$1$.
\end{proof}

We now analyze the sums of entries of the matrices $T_I\in {\mathcal T}$. We have that the some of entries of $T_I$ equals the number of positions with $1$. The latter number equals $|I| (n-|I|)$. It is easy to see that $1\cdot (n-1)\leq 2(n-2)\leq \ldots \leq \lfloor n/2 \rfloor (n-\lfloor n/2 \rfloor)$, moreover the number of non-zero entries of $T_I$ and $T_{I^c}$ are the same. In particular, we have:

\begin{lemma} The set of non-zero matrices in ${\mathcal E}_n$ with minimal sum of entries is ${\mathcal L}\cup {\mathcal C}$.
\end{lemma}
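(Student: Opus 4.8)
The plan is to characterize the non-zero matrices in $\mathcal{E}_n$ that minimize the sum of all entries, and to show this set is exactly $\mathcal{L}\cup\mathcal{C}$. First I would compute the sum of entries of each block: the matrix $T_I$ has a $1$ exactly at positions $(i,j)$ with $i\in I$, $j\in I^c$, so its entry sum is $|I|\,(n-|I|)$. Among the proper subsets $I$, writing $k=|I|$ with $1\le k\le n-1$, the quantity $k(n-k)$ is minimized precisely when $k=1$ or $k=n-1$, giving value $n-1$; this is the concavity of $k\mapsto k(n-k)$ together with the symmetry $k\leftrightarrow n-k$. Hence the blocks of minimal entry sum are exactly those $T_{\{i\}}$ (with $|I|=1$) and $T_{\{i\}^c}$ (with $|I|=n-1$), i.e. the elements of $\mathcal{L}\cup\mathcal{C}$, each having entry sum $n-1$.

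Next I would use the Structure Theorem to pass from blocks to arbitrary matrices. Let $A\in\mathcal{E}_n$ be non-zero. By Theorem~\ref{th:structure_th}, $A$ has a decomposition of the form \eqref{eq:anja} as an $\oplus$-sum of $\odot$-products of blocks; in particular there is at least one block $T$ with $T\le A$. I claim that whenever $B\le B'$ in $\mathcal{E}_n$, the entry sum of $B$ is at most that of $B'$, with equality forcing $B=B'$ (since all entries are non-negative integers and $B\le B'$ means $b_{ij}\le b'_{ij}$ for all $i,j$). Applying this with $B=T$ a block below $A$, the entry sum of $A$ is at least $n-1$, and it equals $n-1$ only if $A=T$ and $T$ has minimal entry sum $n-1$, i.e. $A\in\mathcal{L}\cup\mathcal{C}$. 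Conversely every element of $\mathcal{L}\cup\mathcal{C}$ is a non-zero matrix in $\mathcal{E}_n$ with entry sum $n-1$, so the minimum over non-zero matrices is indeed $n-1$ and is attained exactly on $\mathcal{L}\cup\mathcal{C}$.

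I do not expect a serious obstacle here: the only mildly delicate point is making sure the chain of reasoning ``non-zero $\Rightarrow$ some block lies below it $\Rightarrow$ entry sum $\ge$ that of the block'' is airtight, and that equality in the entry-sum comparison $B\le B'$ genuinely forces $B=B'$ (which it does, component-wise, over $\mathbb{N}^0$). One should also confirm $n-1\le k(n-k)$ for all $1\le k\le n-1$, which is immediate from $k(n-k)-(n-1)=(k-1)(n-1-k)\ge 0$. With these pieces in place the proof is a short paragraph. This lemma will then feed into the analysis of automorphisms of $(\mathcal{E}_n,\odot)$, since any automorphism must permute the set $\mathcal{L}\cup\mathcal{C}$ of non-zero matrices of minimal entry sum — though strictly speaking the entry sum is not obviously $\odot$-automorphism-invariant, so the role of this lemma is likely to be combined with the $\odot$-irreducibility already established in Lemma~\ref{lem:irreducible} rather than used directly via entry sums.
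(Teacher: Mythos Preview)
Your argument is correct and follows essentially the same approach as the paper: the paper computes $\#(T_I)=|I|(n-|I|)$ and notes this is minimized at $|I|\in\{1,n-1\}$, implicitly relying (as you make explicit) on the fact, established in the proof of Theorem~\ref{th:structure_th}, that every non-zero $A\in\mathcal{E}_n$ lies above some block. Your write-up simply spells out the monotonicity step $T\le A\Rightarrow \#(T)\le\#(A)$ and the equality case, which the paper leaves to the reader.
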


For $A\in {\mathcal E}_n$ let $\#(A)$ denote the sum of all entries of $A$. We set $U\in {\mathcal E}_n$ to be the matrix whose diagonal entries are $0$'s, and all other entries are $1$'s, that is,
$$U=\sum_{i=1}^n T_{\{i\}}=\sum_{i=1}^n T_{\{i\}^c}.$$ This matrix will play an important role in our considerations.
Furthermore, let ${\mathcal G}\subseteq {\mathcal E}_n$ denote the set of $\otimes$-irreducible matrices. Clearly, ${\mathcal T}\subseteq {\mathcal G}$. Also, by Lemma \ref{lem:irreducible}, ${\mathcal A},{\tilde{\mathcal L}},{\tilde{\mathcal C}}\subseteq  {\mathcal G}$.

Let $\varphi$ be an automorphism of $({\mathcal E}_n, \odot)$. Clearly $\varphi({\mathcal G})={\mathcal G}$. The following important step is in observing that $\varphi$ fixes $U$ and that $\#(A)$ is invariant under the action of $\varphi$ for any $A\in {\mathcal G}$.

\begin{lemma}\label{lem:important}\mbox{}
\begin{enumerate}
\item $\sum \{A\colon A\in {\mathcal G}\}=tU$ for some $t\in {\mathbb N}$.
\item $\varphi(U)=U$.
\item $\#(\varphi(A)) =\#(A)$.
\item $\varphi({\mathcal L}\cup {\mathcal C})={\mathcal L}\cup {\mathcal C}$.
\item $\varphi({\mathcal L})={\mathcal L}$ or $\varphi({\mathcal L})={\mathcal C}$ (and thus, respectively, $\varphi({\mathcal C})={\mathcal C}$ or $\varphi({\mathcal C})={\mathcal L}$).
\end{enumerate}
\end{lemma}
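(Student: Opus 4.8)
The plan is to prove the five items in the listed order, each feeding the next; all of them are short except~(3), which is the genuine obstacle.

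\emph{Items (1) and~(2).} The first point is that $\mathcal{G}$ is finite: every exponent matrix having an entry $\geq 2$ is $\odot$-decomposable, so that $\mathcal{G}\subseteq\{0,1\}^{n\times n}$. To see the decomposability one peels off from such a matrix $A$ a suitable $0$-$1$ exponent matrix $T\leq A$ with $A-T\in\mathcal{E}_n$: the $0$-$1$ matrix recording the support of $A$ is one candidate, and when it fails (which happens only because of a ``tight triangle'' $a_{ij}+a_{jk}=a_{ik}$ with $a_{ij},a_{jk}\geq 1$) a triangular $0$-$1$ matrix works instead. Once $\mathcal{G}$ is finite, put $S=\sum\{A\colon A\in\mathcal{G}\}\in\mathcal{E}_n$. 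The group $\mathcal{S}_n$ acts on $(\mathcal{E}_n,\odot)$ by $\odot$-automorphisms (permuting indices), hence permutes $\mathcal{G}$, so $S$ is $\mathcal{S}_n$-invariant; having zero diagonal (like every exponent matrix) and all off-diagonal entries equal, $S=tU$, and $t\geq 1$ because $T_{\{1\}}\in\mathcal{T}\subseteq\mathcal{G}$ contributes $1$ to the $(1,2)$-entry of $S$. This is~(1). For~(2): $\odot$-irreducibility is intrinsic, so $\varphi(\mathcal{G})=\mathcal{G}$ and hence $\varphi(S)=\sum_{A\in\mathcal{G}}\varphi(A)=S$; as $S=U^{\odot t}$ and $\varphi$ is multiplicative, $\varphi(U)^{\odot t}=U^{\odot t}$, and dividing by $t$ (matrices over ${\mathbb N}^0$ are cancellative) gives $\varphi(U)=U$.

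\emph{Item (3).} Since $(\mathcal{E}_n,\odot)$ is cancellative, the $\odot$-homomorphism $\#\circ\varphi$ extends to a ${\mathbb Z}$-linear functional on the Grothendieck group of $\mathcal{E}_n$ (the integer matrices with zero diagonal), so $(\#\circ\varphi)(A)=\sum_{i\neq j}\lambda_{ij}a_{ij}$ for suitable $\lambda_{ij}\in{\mathbb Z}$, and the task is to show all $\lambda_{ij}=1$. From $\varphi(\mathcal{G})=\mathcal{G}$ and the fact that non-zero exponent matrices have entry-sum $\geq n-1$ one gets $(\#\circ\varphi)(G)\geq n-1$ for each $G\in\mathcal{G}$; plugging in $T_{\{i\}},T_{\{i\}^c}$, using $U=\sum_iT_{\{i\}}=\sum_iT_{\{i\}^c}$ together with $(\#\circ\varphi)(U)=\#(U)=n(n-1)$ from~(2), forces $(\#\circ\varphi)(T_{\{i\}})=(\#\circ\varphi)(T_{\{i\}^c})=n-1$, i.e.\ every row sum and every column sum of $(\lambda_{ij})$ equals $n-1$. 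To pin down the individual entries I would exploit that $\varphi$, being a bijection, makes $\#\circ\varphi$ and $\#$ have the same image, namely $\{0\}\cup\{k\colon k\geq n-1\}$; evaluating $\#\circ\varphi$ on the $\odot$-irreducibles $A_{ij}$ (for which $(\#\circ\varphi)(A_{ij})=2n-2-\lambda_{ij}$) and on the blocks, and confronting the outcome with the shape of this image, should force $\lambda_{ij}=1$ for every $i\neq j$. Executing this last step cleanly is, I expect, the main obstacle.

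\emph{Items (4) and~(5).} By the lemma above characterising $\mathcal{L}\cup\mathcal{C}$ as the set of non-zero matrices of minimal entry-sum $n-1$, each element of $\mathcal{L}\cup\mathcal{C}$ is $\odot$-irreducible (a non-trivial factorisation would split $n-1$ into two summands each $\geq n-1$, impossible); since $\varphi$ permutes $\mathcal{G}$, fixes $0$, and preserves $\#$ by~(3) (in fact only the row/column-sum part of~(3) established above is needed here), it preserves this minimal subset of $\mathcal{G}$, giving $\varphi(\mathcal{L}\cup\mathcal{C})=\mathcal{L}\cup\mathcal{C}$, which is~(4). For~(5): the only factorisations of $U$ as a $\odot$-product of $n$ elements of $\mathcal{L}\cup\mathcal{C}$ are $T_{\{1\}}\odot\cdots\odot T_{\{n\}}$ and $T_{\{1\}^c}\odot\cdots\odot T_{\{n\}^c}$ — a repeated factor, or a factor $T_{\{i\}}$ together with a factor $T_{\{j\}^c}$ with $i\neq j$, puts an entry $\geq 2$ into the product, so for $n\geq 3$ no $n$ distinct factors can be mixed. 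Thus $U$ has precisely two such factorisations, one using all of $\mathcal{L}$ and one using all of $\mathcal{C}$; since $\varphi$ fixes $U$ and (by~(4)) permutes $\mathcal{L}\cup\mathcal{C}$, it must carry a factorisation of $U$ of this form to another one, i.e.\ $\varphi(\mathcal{L})=\mathcal{L}$ or $\varphi(\mathcal{L})=\mathcal{C}$ (and correspondingly for $\mathcal{C}$).
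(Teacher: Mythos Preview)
Your claim in item~(1) that every exponent matrix with an entry $\geq 2$ is $\odot$-reducible is false: for $n=4$ the strictly upper-triangular matrix $A$ with $a_{12}=a_{13}=a_{23}=a_{24}=a_{34}=1$ and $a_{14}=2$ lies in $\mathcal{E}_4$, yet for any splitting $A=B+C$ inside $\mathcal{E}_4$ the inequalities $b_{13}\geq\max(b_{12},b_{23})$ and $c_{13}\geq\max(c_{12},c_{23})$ (coming from $b_{21}=b_{32}=0$, etc.) force $b_{12}=b_{13}=b_{23}\in\{0,1\}$; in the case $b_{12}=b_{13}=b_{23}=0$ one gets $c_{24}\geq c_{23}=1$, hence $b_{24}=0$, and then $b_{12}+b_{24}\geq b_{14}$ and $b_{24}\geq b_{34}$ force $b_{14}=b_{34}=0$, so $B=0$ (the other case is symmetric). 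Thus $A\in\mathcal{G}$ but $A\notin\{0,1\}^{4\times 4}$, and your finiteness argument for $\mathcal{G}$ collapses. (The paper does not establish finiteness of $\mathcal{G}$ either; it only uses that each single $\mathcal{S}_n$-orbit, being finite, sums to a multiple of $U$.)

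Your plan for item~(3) is, as you acknowledge, incomplete, and the route via pinning down the individual coefficients $\lambda_{ij}$ is harder than necessary. The paper's argument is short and avoids this entirely: stratify $\mathcal{G}$ into levels $M_i=\{A\in\mathcal{G}:\#(A)=i\}$ and prove $\varphi(M_i)=M_i$ by induction on $i$. For any $A\in\mathcal{G}$ the sum $\sum_{\sigma\in\mathcal{S}_n}\sigma\cdot A$ equals $(n-2)!\,\#(A)\,U$ and is therefore $\varphi$-fixed by~(2); hence $\sum_{\sigma}\varphi(\sigma\cdot A)$ has total entry-sum $n!\,\#(A)$. Once the lower levels are known to be $\varphi$-stable, each summand $\varphi(\sigma\cdot A)$ lies in $\mathcal{G}\setminus\bigcup_{j<\#(A)}M_j$ and so has entry-sum $\geq\#(A)$; with $n!$ summands this forces equality, giving $\varphi(\sigma\cdot A)\in M_{\#(A)}$. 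This yields~(3) for all $A\in\mathcal{G}$, which is all the subsequent arguments require. Your treatments of~(4) and~(5) are essentially the paper's.
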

\begin{proof} (1) We begin by observing that ${\mathcal G}$ is invariant under the action of  ${\mathcal S}_n$. It follows that ${\mathcal G}$ is a union of several ${\mathcal S}_n$-orbits. If $A=(a_{ij})\in {\mathcal G}$ then the entry at the position $pq$ with $p\neq q$ of the sum of all elements of the orbit of $A$ equals $\sum_{\sigma\in {\mathcal S}_n}a_{\sigma^{-1}(p)\sigma^{-1}(q)}$. It follows that this entry equals $(n-2)! \#(A)$. Thus the sum of all the matrices in the orbit of $A$ equals $(n-2)!\#(A)U$, and (1) follows.

(2) follows form (1) as $\varphi({\mathcal G})={\mathcal G}$.

(3) For each $i\geq 1$ let $M_i=\{A\in {\mathcal G}\colon \#(A)=i\}$. Let $i_0$ be the minimal $i$ for which $M_i\neq \varnothing$ (it is easy to see that $i_0=n-1$). Let $A\in M_i$. As above, the sum of all matrices of the orbit of $A$ under the action of ${\mathcal S}_n$ equals $(n-2)!i_0U$, and, because the latter matrix is fixed by $\varphi$, we have the sum of all matrices of the orbit of $A$ is fixed by $\varphi$, too:
$\sum_{\sigma\in {\mathcal S}_n}\varphi(\sigma(A))=\sum_{\sigma\in {\mathcal S}_n}\sigma(A)$. On both sides of this equality we have a sum of $n!$ matrices. The sum in the right-hand side is such that the sum of entries of each its member is minimal possible, $i_0$. It follows that the same must be true about the sum in the left-hand side, which, too, has $n!$ summands. We thus have that $\#(\varphi(\sigma(A))=i_0$ for all $\sigma\in {\mathcal S}_n$. This implies that $\varphi(M_{i_0})=M_{i_0}$.

Let $i_1$ be the minimal $i>i_0$ such that $M_{i_1}\neq \varnothing$. Repeating the argument above and using the fact that $\varphi(M_{i_0})=M_{i_0}$ we obtain that $\varphi(M_{i_1})=M_{i_1}$. Aplying induction, it follows that
$\varphi(M_{i})=M_{i}$ for all $i\geq 1$.

(4) As was mentioned in the proof of (3) above, $i_0=n-1$ and we have $M_{i_0}={\mathcal L}\cup {\mathcal C}$. Thus
$\varphi({\mathcal L}\cup {\mathcal C})={\mathcal L}\cup {\mathcal C}$, as desired.

(5) By the above we have that, for each $i=1,\dots, n$,  $\varphi(T_{\{i\}})$ equals either some $T_{\{j\}}$, or some $T_{\{j\}^c}$. Applying $\varphi$ to the equality $\sum_{i=1}^nT_{\{i\}}=U$, we obtain
$$
T_{\{i_1\}}+\ldots + T_{\{i_p\}} + T_{\{i_{p+1}\}^c}+\ldots + T_{\{i_n\}^c}=U,
$$
But, unless $p=0$ or $p=n$, the sum in the left-hand side has at least one entry which is greater than one which is a contradiction. The statement follows.
\end{proof}

We now turn to the behavior of the image of the set ${\mathcal A}\cup {\tilde{\mathcal L}}\cup {\tilde{\mathcal C}}$ with respect to $\varphi$. We first observe that $\#(A)=2n-3$ for $A\in {\mathcal A}\cup {\tilde{\mathcal L}}\cup {\tilde{\mathcal C}}$. Note also that for any such $A$ all its entries are $0$'s and $1$'s: indeed, assuming that  $a_{ij}\geq 2$  we would have $a_{ik}+a_{kj}\geq a_{ij}\geq 2$ for all $k=1,\dots, n$, yielding $\#(A)\geq 2n$, a contradiction.

\begin{lemma}\label{lem:further} $\varphi({\mathcal A}\cup {\tilde{\mathcal L}}\cup {\tilde{\mathcal C}})={\mathcal A}\cup {\tilde{\mathcal L}}\cup {\tilde{\mathcal C}}$.
\end{lemma}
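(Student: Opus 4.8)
The plan is to identify $\mathcal A\cup\tilde{\mathcal L}\cup\tilde{\mathcal C}$ with the set $M_{2n-3}=\{A\in\mathcal G:\#(A)=2n-3\}$, which $\varphi$ preserves by Lemma~\ref{lem:important}(3). The inclusion $\mathcal A\cup\tilde{\mathcal L}\cup\tilde{\mathcal C}\subseteq M_{2n-3}$ is immediate from Lemma~\ref{lem:irreducible} and the entry count $\#(A)=2n-3$ noted just before the statement. I would first record two auxiliary facts: every $A\in\mathcal G$ with $\#(A)=2n-3$ is $0$--$1$ (the computation preceding the lemma applies verbatim), and conversely every $0$--$1$ matrix of $\mathcal E_n$ with exactly $2n-3$ ones is automatically $\odot$-irreducible, since a factorisation $A=B\odot C$ with $B,C\neq 0$ would force $\#(B)+\#(C)=2n-3$ with $\#(B),\#(C)\geq n-1$ (the minimal nonzero entry-sum in $\mathcal E_n$), which is impossible. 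Hence $M_{2n-3}$ is exactly the set of $0$--$1$ exponent matrices with $2n-3$ ones, and the whole lemma reduces to identifying this set.

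The heart of the matter is the classification of $M_{2n-3}$. Let $A\in M_{2n-3}$. If $A$ has a full row $i$, restrict $A$ to $\{1,\dots,n\}\setminus\{i\}$: the restriction lies in $\mathcal E_{n-1}$, and counting entries ($\#(A)-(n-1)-s$, where $s$ is the number of $1$'s in the $i$th column of $A$) together with the minimal entry-sum lemma for $\mathcal E_{n-1}$ forces $s=0$ and the restriction to be a single minimal block, i.e. a full row or a full column; tracing back, $A=L_{ji}\in\tilde{\mathcal L}$ or $A=A_{ij}\in\mathcal A$. Dually, a full column gives $A\in\mathcal A\cup\tilde{\mathcal C}$. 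Suppose now $A$ has no full row and no full column; then in the row decomposition $A=\bigoplus_p T_{C_p}$ all $|C_p|$ lie in $[2,n-2]$. Choose a nonzero row $p$ of maximal weight $r$, so $|C_p|=n-r$ and $T_{C_p}\leq A$; then $r(n-r)\leq\#(A)=2n-3$ together with $2n-4\leq r(n-r)$ forces $r\in\{2,n-2\}$ or $(n,r)=(6,3)$. The case $r=2$ is impossible: no nonzero row can have a single $1$ (that would make $T_{C_p}\leq A$ have a full column), so every nonzero row has weight $2$ and $2n-3=2|P|$ is even. The case $r=n-2$ is impossible: $T_{C_p}=T_{\{p,x\}}\leq A$ forces a second row $x$ with the same zero-set, and a third nonzero row $q$ would give two distinct blocks $T_{\{p,x\}},T_{C_q}\leq A$ whose supports sit inside the $(2n-3)$-element set $\operatorname{supp}A$, hence overlap in at least $2n-5$ positions, whereas a short estimate shows two distinct blocks one of which has size $2$ overlap in at most $2n-6$ positions; so $|P|=2$, contradicting $|P|\geq 3$ (which holds since $2n-3\leq|P|(n-2)$). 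Therefore $n=6$, $r=3$, and $A=T_{C_p}$ with $|C_p|=3$. Consequently $M_{2n-3}=\mathcal A\cup\tilde{\mathcal L}\cup\tilde{\mathcal C}$ whenever $n\neq 6$, and in that case the lemma is immediate from $\varphi(M_{2n-3})=M_{2n-3}$.

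For $n=6$ there are the extra matrices $M_{9}=(\mathcal A\cup\tilde{\mathcal L}\cup\tilde{\mathcal C})\sqcup\{T_I:|I|=3\}$, and one still has to show that $\varphi$ stabilises each summand; I expect this to be the main obstacle, precisely because $\varphi$ is assumed to respect only $\odot$ and a priori neither $\oplus$ nor $\leq$. The intrinsic feature separating the two parts is that the blocks $T_I$ are exactly the $\oplus$-irreducible elements of $M_9$ (if $B\leq T_I$ in $\mathcal E_6$ then $B\in\{0,T_I\}$, since distinct blocks are $\leq$-incomparable, so $T_I$ is join-irreducible; while $A_{ij},L_{ij},C_{ij}$ are proper joins of two blocks, in fact of two elements of $M_{n-1}=\mathcal L\cup\mathcal C$, which $\varphi$ preserves by Lemma~\ref{lem:important}(4)). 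After normalising $\varphi$ so that $\varphi(T_{\{i\}})=T_{\{i\}}$ for all $i$ (legitimate, since composing $\varphi$ with the transpose and with a permutation of the indices fixes $\mathcal A\cup\tilde{\mathcal L}\cup\tilde{\mathcal C}$ setwise), one argues that $\varphi$ cannot carry an $\oplus$-reducible element of $M_9$ to an $\oplus$-irreducible one, using the $\odot$-footprint of $A_{ij},L_{ij},C_{ij}$ relative to the $\varphi$-fixed sets $\{T_{\{i\}}\}$ and $\mathcal C$. Making this last step rigorous is the genuinely delicate point.
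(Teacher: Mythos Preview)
Your identification of $\mathcal A\cup\tilde{\mathcal L}\cup\tilde{\mathcal C}$ with $M_{2n-3}$ for $n\neq 6$ is correct, and the classification of the $0$--$1$ exponent matrices with $2n-3$ ones via the maximal row weight $r$ and the squeeze $2n-4\leq r(n-r)\leq 2n-3$ is a cleaner route than the paper's. The paper proceeds case by case ($n>6$, $n=6$, $n=5$, with $n=3,4$ left to the reader), each time checking which blocks $T_I$ can appear in an $\oplus$-decomposition of a matrix with entry-sum $2n-3$ and which joins of at most two of them hit that count; your argument handles all $n\neq 6$ uniformly. One small omission: in the ``full row $i$'' step you should rule out the possibility that the $(n-1)\times(n-1)$ restriction is zero. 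It cannot be: if it were, column $i$ would carry $n-2$ ones with a single zero at some $j\neq i$, and then for any $k\neq i,j$ one has $a_{kj}+a_{ji}=0+0<1=a_{ki}$, violating (EM2).

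The genuine gap is the $n=6$ case, and you acknowledge as much. You correctly obtain $M_9=(\mathcal A\cup\tilde{\mathcal L}\cup\tilde{\mathcal C})\sqcup\mathcal T_3$ with $\mathcal T_3=\{T_I:|I|=3\}$, but you do not prove $\varphi(\mathcal T_3)=\mathcal T_3$. The observation that the $T_I$ are exactly the $\oplus$-irreducibles of $M_9$ is true but unusable here, since $\varphi$ is only a $\odot$-automorphism; and ``$\odot$-footprint relative to the $\varphi$-fixed sets'' is a phrase, not an argument. The paper closes this by writing down, for each three-element set $\{p,q,r\}$, the additive identity
\[
T_{\{p,q,r\}}+T_{\{p\}^c}+T_{\{q\}^c}+T_{\{r\}^c}=T_{\{p,q,r\}^c}+T_{\{p\}}+T_{\{q\}}+T_{\{r\}},
\]
applying $\varphi$, using Lemma~\ref{lem:important}(5) to control the six singleton and co-singleton terms, and then comparing rows on both sides to force $\varphi(T_{\{p,q,r\}})\geq T_J$ for some $|J|=3$, whence equality by the entry count. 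Some $\odot$-relation of this kind, linking $\mathcal T_3$ to the already-controlled set $\mathcal L\cup\mathcal C$, is what your proof needs; the $\oplus$-based intuition does not transfer.
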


\begin{proof}
We divide the proof into several cases.

{\bf Case 1.} Assume first that $n>6$.  Let $A\in {\mathcal A}\cup {\tilde{\mathcal L}}\cup {\tilde{\mathcal C}}$.
We have $\#(\varphi(A))=2n-3$ by part (3) of Lemma \ref{lem:important}. As all the entries of $\varphi(A)$ are $0$'s and $1$'s, Remark~\ref{rem:zero-one} implies that $\varphi(A)$ can be expressed as a $\oplus$-combination of blocks. Any matrix $B$ involved into such a combination combination must satisfy $\#(B)\leq 2n-3$. Thus
we must have $B=T_I$ or $B=T_{I^c}$ with $|I|=1$ or $|I|=2$ (because if $|I|\geq 3$ we would have $\#(B)\geq 3(n-3)=3n-9>2n-3$ as $n>6$). Note that $\#(B)=n-1$ if $|I|=1$ and $\#(B)=2n-4$ if $|I|=2$. It easily follows that for pairwise distinct $B_1, B_2, B_3$ equal to $T_I$ or $T_{I^c}$ with $|I|=1$ or $|I|=2$ we have $\#(B_1\oplus B_2 \oplus B_3)>2n-3$, so we are left to consider only the case where $\varphi(A)=B_1\oplus B_2$ with $B_1, B_2$ of the form $T_I$ or $T_{I^c}$ where $|I|=1$ or $|I|=2$. This and $\#(\varphi(A))=2n-3$ yield that $\varphi(A)\in {\mathcal A}\cup {\tilde{\mathcal L}}\cup {\tilde{\mathcal C}}$, as desired.

{\bf Case 2.} Assume that $n=6$. In this case $2n-3=9$. We first prove that $\#(A)=9$ implies that $A\in T_I$ where $|I|=3$, or  $A\in {\mathcal A}\cup {\tilde{\mathcal L}}\cup {\tilde{\mathcal C}}$. As all entries of $A$ are zeros and ones, we have that $A$ is expressed as a $\oplus$-combination of matrices $T_I$.  If such a combination contains $B= T_I$ with $|I|=3$ then $\#(B)=9$ and we must have $A=B$. Assume that such a combination contains only matrices of the form $T_I$ or $T_{I^c}$ where $|I|=1$ or $|I|=2$. Then $\#(B)=5$ or $\#(B)=8$. A similar analysis as in the previous case leads to $A\in {\mathcal A}\cup {\tilde{\mathcal L}}\cup {\tilde{\mathcal C}}$.

Let ${\mathcal T}_3=\{T_I\colon |I|=3\}$. From the previous paragraph and part (3) of Lemma \ref{lem:important} it follows that $\varphi({\mathcal T}_3\cup {\mathcal A}\cup {\tilde{\mathcal L}}\cup {\tilde{\mathcal C}})={\mathcal T}_3\cup {\mathcal A}\cup {\tilde{\mathcal L}}\cup {\tilde{\mathcal C}}$. The needed equality $\varphi( {\mathcal A}\cup {\tilde{\mathcal L}}\cup {\tilde{\mathcal C}})={\mathcal A}\cup {\tilde{\mathcal L}}\cup {\tilde{\mathcal C}}$ will follow if we prove that $\varphi({\mathcal T}_3)={\mathcal T}_3$. Let $A\in {\mathcal T}_3$. We assume that $A=T_{\{p,q,r\}}$. Consider the equality
$$
T_{\{p,q,r\}}+T_{\{p\}^c}+T_{\{q\}^c}+T_{\{r\}^c}=T_{\{p,q,r\}^c}+T_{\{p\}}+T_{\{q\}}+T_{\{r\}}.
$$
We apply $\varphi$ to both sides of this equality. In view of part (5) of Lemma \ref{lem:important} we obtain either
$$
\varphi(T_{\{p,q,r\}})+T_{\{i_1\}^c}+T_{\{i_2\}^c}+T_{\{i_3\}^c}=\varphi(T_{\{p,q,r\}^c})+T_{\{j_1\}}+T_{\{j_2\}}+T_{\{j_3\}}
$$
or
$$
\varphi(T_{\{p,q,r\}})+T_{\{i_1\}}+T_{\{i_2\}}+T_{\{i_3\}}=\varphi(T_{\{p,q,r\}^c})+T_{\{j_1\}^c}+T_{\{j_2\}^c}+T_{\{j_3\}^c}
$$
where $i_1,i_2,i_3$, as well as $j_1,j_2,j_3$ are pairwise distinct.
Without loss of generality, we assume that we obtain the former equality. The matrix $T_{\{j_1\}}+T_{\{j_2\}}+T_{\{j_3\}}$ has in the rows $j_1, j_2, j_3$ all elements, but the diagonal ones, equal to $1$. It follows that the matrix in the left-hand side, $\varphi(T_{\{p,q,r\}})+T_{\{i_1\}^c}+T_{\{i_2\}^c}+T_{\{i_3\}^c}$, must be greater than or equal to this matrix. It easily follows that $\varphi(T_{\{p,q,r\}})\geq T_{\{j_1,j_2,j_3\}}$ and thus, as $\#(\varphi(T_{\{p,q,r\}}))=9$, we get the equality $\varphi(T_{\{p,q,r\}})=T_{\{j_1,j_2,j_3\}}$. It follows that $\varphi({\mathcal T}_3)={\mathcal T}_3$, as desired.

{\bf Case 3.} Assume that $n=5$. Then $2n-3=7$. If $|I|=1$, $\#(T_{|I|})=\#(T_{|I|^c})=4$, if $|I|=2$, $\#(T_{|I|})=\#(T_{|I|}^c)=6$. Considering $\oplus$-combinations of such matrices, similarly as in Case 1 above, yields that if $A\in {\mathcal A}\cup {\tilde{\mathcal L}}\cup {\tilde{\mathcal C}}$ then
$\varphi(A)\in {\mathcal A}\cup {\tilde{\mathcal L}}\cup {\tilde{\mathcal C}}$, too.

The cases where $n=3$ and $n=4$ can be treated similarly and are left to the reader.
\end{proof}

\begin{lemma}\mbox{}
\begin{enumerate}
\item There is $\sigma\in {\mathcal S}_n$ such that we have either $\varphi\left(T_{\{i\}}\right)=T_{\{\sigma(i)\}}$ and $\varphi\left(T_{\{i\}^c}\right)=T_{\{\sigma(i)\}^c}$, or $\varphi\left(T_{\{i\}}\right)=T_{\{\sigma(i)\}^c}$ and $\varphi\left(T_{\{i\}^c}\right)=T_{\{\sigma(i)\}}$.
\item $\varphi(\mathcal{A})=\mathcal{A}$.
\item If $i\neq j$ then $\varphi(A_{ij})=\varphi\left(T_{\{i\}}\right)\oplus \varphi\left(T_{\{j\}^c}\right)$.
\end{enumerate}
\end{lemma}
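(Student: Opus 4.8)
The plan is to establish all three statements together (not in the order listed), by first normalising $\varphi$ and then recovering in turn the permutation, the invariance of $\mathcal A$, and the $\oplus$-compatibility. By part~(5) of Lemma~\ref{lem:important}, either $\varphi(\mathcal L)=\mathcal L$ and $\varphi(\mathcal C)=\mathcal C$, or $\varphi(\mathcal L)=\mathcal C$ and $\varphi(\mathcal C)=\mathcal L$. In the second case I replace $\varphi$ by $a\circ\varphi$, where $a$ is the transpose automorphism of $(\mathcal E_n,\odot)$; since $a(T_I)=T_{I^c}$, $a(A_{ij})=A_{ji}$, $a(\mathcal A)=\mathcal A$ and $a$ preserves $\mathcal A\cup\tilde{\mathcal L}\cup\tilde{\mathcal C}$, the three conclusions for $\varphi$ follow from the corresponding ones for $a\circ\varphi$ (with the two alternatives of~(1) interchanged and with $i,j$ swapped in~(3)). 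So assume from now on $\varphi(\mathcal L)=\mathcal L$, $\varphi(\mathcal C)=\mathcal C$, and let $\sigma,\tau\in\mathcal S_n$ be defined by $\varphi(T_{\{i\}})=T_{\{\sigma(i)\}}$ and $\varphi(T_{\{i\}^c})=T_{\{\tau(i)\}^c}$. It then remains to prove statement~(2), the equality $\sigma=\tau$ (which is statement~(1)), and statement~(3) in the sharp form $\varphi(A_{ij})=A_{\sigma(i)\tau(j)}$.

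\emph{Invariance of $\mathcal A$ and recovery of $\sigma$.} Put $\mathcal F=\mathcal A\cup\tilde{\mathcal L}\cup\tilde{\mathcal C}$; by Lemma~\ref{lem:further}, $\varphi(\mathcal F)=\mathcal F$. All members of $\mathcal F$ are $0/1$-matrices, and a count using their explicit shapes shows that, for $n\ge 4$, exactly two members $Y$ of $\mathcal L\cup\mathcal C$ satisfy $Y\le X$ when $X\in\mathcal A$ (the unique full row and full column of the ``cross'' $X$), whereas exactly one does when $X\in\tilde{\mathcal L}\cup\tilde{\mathcal C}$. The plan is to recast the relation ``$Y\le X$'', for $X\in\mathcal F$ and $Y\in\mathcal L\cup\mathcal C$, as a condition involving only $\odot$ and the already $\varphi$-invariant sets $\mathcal L$, $\mathcal C$, $\mathcal F$, $\mathcal G$ (for instance, in terms of which $\odot$-products of $Y$, $X$ and auxiliary blocks again lie in $\mathcal F$, or are $\odot$-divisible within $\mathcal L\cup\mathcal C$), and to check the small cases $n=3,4$ by hand, as in the proof of Lemma~\ref{lem:further}. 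This yields $\varphi(\mathcal A)=\mathcal A$, which is statement~(2), and, since $T_{\{i\}}$ and $T_{\{j\}^c}$ are precisely the two members of $\mathcal L\cup\mathcal C$ lying below $A_{ij}$, it shows that if $\varphi(A_{ij})=A_{kl}$ (as it must be, by~(2)) then $\{\varphi(T_{\{i\}}),\varphi(T_{\{j\}^c})\}=\{T_{\{k\}},T_{\{l\}^c}\}$, i.e.\ $\sigma(i)=k$ and $\tau(j)=l$.

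\emph{Conclusion.} The previous paragraph gives $\varphi(A_{ij})=A_{\sigma(i)\tau(j)}$ for every ordered pair $i\ne j$; in particular $A_{\sigma(i)\tau(j)}\in\mathcal A$, so $\sigma(i)\ne\tau(j)$ whenever $i\ne j$. If $\sigma\ne\tau$, pick $j$ with $\sigma^{-1}\tau(j)\ne j$ and set $i=\sigma^{-1}\tau(j)$; then $i\ne j$ while $\sigma(i)=\tau(j)$, a contradiction. Hence $\sigma=\tau$, which is statement~(1), and then $\varphi(A_{ij})=A_{\sigma(i)\sigma(j)}=T_{\{\sigma(i)\}}\oplus T_{\{\sigma(j)\}^c}=\varphi(T_{\{i\}})\oplus\varphi(T_{\{j\}^c})$, which is statement~(3).

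The main obstacle is the recurring need to translate the order-theoretic data used above — the number of blocks lying below a given $0/1$-matrix of $\mathcal F$, and the relation $Y\le A_{ij}$ — into the language of $\odot$ alone, working only with the invariant subsets already secured, the operation $\oplus$ not being available. Since the combinatorics of the relevant products genuinely changes in low dimension (already visible in Lemma~\ref{lem:further}), a few small values of $n$ will have to be disposed of directly.
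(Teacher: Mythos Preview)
Your proposal has a genuine gap at its center. The entire argument for (2) and (3) rests on counting how many $Y\in\mathcal L\cup\mathcal C$ satisfy $Y\le X$ for $X\in\mathcal F$, and then identifying which two lie below a given $A_{ij}$. But $\le$ is defined via $\oplus$, and nothing established so far says that an automorphism of $(\mathcal E_n,\odot)$ preserves $\le$. You see this yourself and promise to ``recast the relation $Y\le X$ \dots\ as a condition involving only $\odot$'', but you never do it: the parenthetical suggestions (membership of certain products in $\mathcal F$, divisibility within $\mathcal L\cup\mathcal C$) are not worked out, and it is not clear any of them actually characterises $Y\le X$. Without that translation, the counts are not $\varphi$-invariant and the argument does not go through. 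The low-dimensional issues you flag are secondary to this; the real problem is that the main step is only a hope, not an argument.

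The paper avoids the order relation entirely. Its key device is the purely additive identity
\[
\sum_{k\ne i} A_{ik} + T_{\{i\}^c} \;=\; (n-2)\,T_{\{i\}} + U,
\]
which lives wholly in $(\mathcal E_n,\odot)$. Applying $\varphi$ and using that every $X\in\mathcal A\cup\tilde{\mathcal L}\cup\tilde{\mathcal C}$ satisfies $X_{pq}+X_{qp}\le 1$ (so the left side has $(\,\cdot\,)_{pq}+(\,\cdot\,)_{qp}\le n-1$ off the $\tau(i)$-column), a comparison of the $(\sigma(i),k)$ and $(k,\sigma(i))$ entries forces $\sigma(i)=\tau(i)$, giving (1). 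Then reading the same transformed identity columnwise and rowwise shows no $\varphi(A_{ik})$ can lie in $\tilde{\mathcal L}$ (zero columns would appear) nor in $\tilde{\mathcal C}$ (dually), proving (2); and a further entry count along row $\sigma(i)$ pins down $\varphi(A_{ik})=A_{\sigma(i)\sigma(k)}$, which is (3). This route is uniform in $n\ge 3$ and never appeals to $\le$, which is exactly what your outline is missing.
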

\begin{proof}
(1) From part (5) of Lemma \ref{lem:important} we know that $\varphi({\mathcal L})={\mathcal L}$ or $\varphi({\mathcal L})={\mathcal C}$ (and thus, respectively, $\varphi({\mathcal C})={\mathcal C}$ or $\varphi({\mathcal C})={\mathcal L}$). We assume that $\varphi({\mathcal L})={\mathcal L}$ and $\varphi({\mathcal C})={\mathcal C}$, the other case being treated similarly. As $\varphi$ is a bijection, there are $\sigma,\tau\in {\mathcal S}_n$ such that $\varphi\left(T_{\{i\}}\right)=T_{\{\sigma(i)\}}$ and $\varphi\left(T_{\{i\}^c}\right)=T_{\{\tau(i)\}^c}$ for all $i=1,\dots, n$. Hence, we need only to prove that $\tau=\sigma$.

Observe that for each $i\in \{1,\dots, n\}$ the following equality holds:
\begin{equation}\label{eq:aaa}
\sum_{k\neq i} A_{ik} + T_{\{i\}^c} = (n-2)T_{\{i\}} + U.
\end{equation}
Applying $\varphi$ to both sides of this equality, we get
\begin{equation}\label{eq:aab}
\sum_{k\neq i} \varphi(A_{ik}) + T_{\{\tau(i)\}^c} = (n-2)T_{\{\sigma(i)\}} + U,
\end{equation}
or, equivalently,
\begin{equation}\label{eq:aac}
\sum_{k\neq i} \varphi(A_{ik})  = (n-2)T_{\{\sigma(i)\}} + U - T_{\{\tau(i)\}^c},
\end{equation}
Assume, from the converse that $\tau(i)\neq \sigma(i)$. Denote the matrix in the right-hand side of \eqref{eq:aac} by $B=(b_{st})$ and the matrix in the left-hand side by $C=(c_{st})$.
Let $k\neq \tau(i),\sigma(i)$. Then $b_{\sigma(i)k}+b_{k\sigma(i)}=n$. On the other hand, as $\varphi(A_{ik})\in {\mathcal A}\cup {\tilde{\mathcal L}}\cup {\tilde{\mathcal C}}$, for any $p\neq q$ we have that $(\varphi(A_{ik}))_{pq}+(\varphi(A_{ik}))_{qp}\leq 1$. It follows that $c_{pq}\leq n-1$ for all $p\neq q$. Thus the equality $B=C$ can not hold. The obtained contradiction shows that $\tau(i)=\sigma(i)$.

(2) Observe that in \eqref{eq:aac} (we already know that $\tau=\sigma$) the matrix in the right-hand side does not have any zero column. Thus neither does the matrix in the left-hand side. It follows that $\varphi(A_{ik})\in {\mathcal A}\cup {\tilde{\mathcal C}}$. Switching rows and columns, we can write the `transpose' of the equality \eqref{eq:aaa}, then apply $\varphi$ to it and get the `transpose' of \eqref{eq:aac}. We similarly conclude that $\varphi(A_{ik})\in {\mathcal A}\cup {\tilde{\mathcal L}}$. Therefore, $\varphi(A_{ik})\in {\mathcal A}$, as desired.

(3) Observe the $\sigma(i)$th row of the matrix in the right-hand side of \eqref{eq:aac} (we already know that $\tau=\sigma$) has all non-diagonal entries equal to $n-1$. As this is achieved as a sum of $n-1$ matrices of the form $A_{ik}$, we conclude that $\varphi(A_{ik})\in \{A_{\sigma(i)t}\colon t\neq \sigma(i)\}$. Similarly, switching rows and columns, we get $\varphi(A_{ki})\in \{A_{t\sigma(i)}\colon t\neq \sigma(i)\}$. Therefore, $\varphi(A_{ik})=A_{\sigma(i)\sigma(k)}$, and the desired equality follows.\end{proof}

We now conclude the proof of Theorem  \ref{th:aut_odot}. We denote
by $M_n^+$ the additive semigroup of all non-negative integer
$n\times n$-matrices with zero diagonal. For any
$\varphi\in$  ${\mathrm{Aut}}(\mathcal{E}_n, \odot)$  we define an endomorphism
$\widehat{\varphi}$ of $M_n^+$ by $\widehat{\varphi}(e_{ij}) =
\varphi(T_{\{i\}}) + \varphi(T_{\{j\}^c}) -\varphi(A_{ij})$, $i\neq j$. Assume that $\varphi({\mathcal L})={\mathcal L}$.
It follows that there is $\sigma\in {\mathcal S}_n$ such that $$\widehat{\varphi}(e_{ij}) =
\varphi(T_{\{i\}}) + \varphi(T_{\{j\}^c}) -\varphi(A_{ij})=T_{\{\sigma(i)\}}+T_{\{\sigma(j)\}^c}-A_{\sigma(i)\sigma(j)}=e_{\sigma(i)\sigma(j)}.$$

Thus, $\widehat{\varphi}$ is an
automorphism of $M_n^+$ and for $A=(a_{ij})$ we have $\widehat{\varphi}(A) = (a_{\sigma(i)\sigma(j)})$.
It also follows that the restrictions of
$\varphi$ and  $\widehat{\varphi}$ to $\mathcal{L}\cup \mathcal{C}\cup
\mathcal{A}$ coinside.
Moreover, as any $A\in \mathcal{E}_n$ can be written as
$ A =  \sum\limits_{i,j:\, i\neq j}\alpha_{ij}e_{ij} =
\sum\limits_{i,j:\, i\neq j}\alpha_{ij}(T_{\{i\}} + T_{\{j\}^c} - A_{ij})$, the restriction of $\widehat{\varphi}$
to $\mathcal{E}_n$ coinsides with $\varphi$.  The case where $\varphi({\mathcal L})={\mathcal C}$ is similar, as matrix tranposing commutes with the action of ${\mathcal S}_n$.

\section{Automorphisms of $({\mathcal E}_n, \oplus)$}\label{sec:aut_oplus}

\subsection{Strict downsets of elements of ${\mathcal E}_n$}

A {\em strict downset} of $A\in {\mathcal E}_n$, denoted by
$A^{\Downarrow}$ is the set of all $B\in {\mathcal E}_n$
satisfying $B\lneq A$. Clearly, a strict downset of an element
does not allow to reconstruct this element, as, for example, all
minimal elements of ${\mathcal E}_n$, which are the elements of
the set ${\mathcal T}$, have the same strict downset, consisting
of the zero matrix. In this section we prove the following.

\begin{theorem}\label{th:downset} Let $A,B\in {\mathcal E}_n$ and $A\not\in {\mathcal T}$ and assume that $A^{\Downarrow}=B^{\Downarrow}$. Then $A=B$. \end{theorem}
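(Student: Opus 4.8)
The plan is to reconstruct $A$ from the single matrix $D:=\bigoplus A^{\Downarrow}$; then $A^{\Downarrow}=B^{\Downarrow}$ immediately forces $A=B$. First dispose of the degenerate cases. If $A=0$ then $A^{\Downarrow}=\varnothing$, hence $B^{\Downarrow}=\varnothing$ and $B=0$. If $A\neq0$, then, since $A\notin{\mathcal T}$, the minimality argument in the proof of Theorem~\ref{th:structure_th} yields a block $T\lneq A$, so $T\in A^{\Downarrow}=B^{\Downarrow}$; as blocks are exactly the minimal non-zero elements of $({\mathcal E}_n,\leq)$, this already gives $B\neq0$ and $B\notin{\mathcal T}$. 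Put $D:=\bigoplus A^{\Downarrow}=\bigoplus B^{\Downarrow}$, so $0\lneq D\leq A$.

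Next comes a join-reducibility dichotomy. If $A$ is \emph{join-reducible}, i.e.\ $A=X\oplus Y$ with $X,Y\lneq A$, then $X,Y\in A^{\Downarrow}$ gives $D=A$, whence $A=D=\bigoplus B^{\Downarrow}\leq B$; if $A\lneq B$ then $A\in B^{\Downarrow}=A^{\Downarrow}$, impossible, so $A=B$. The same computation shows that if $A$ is join-irreducible then so is $B$, and that in this case both $A$ and $B$ \emph{cover} $D$ in $({\mathcal E}_n,\leq)$: anything strictly between $D$ and $A$ lies in $A^{\Downarrow}$ and hence cannot be strictly above $D=\bigoplus A^{\Downarrow}$. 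Consequently, if $A\neq B$ they are incomparable. So from now on assume $A,B$ are join-irreducible non-blocks with $A\neq B$, aiming for a contradiction.

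By the row decomposition~\eqref{eq:rcd} and join-irreducibility, $A=T(p_0)$ for some $p_0$; thus $A$ is a \emph{difference matrix}, $a_{ij}=\max(f(j)-f(i),0)$ with $f(i):=a_{p_0i}\geq0$ non-constant, and (since $A\notin{\mathcal T}$) its largest entry $M$ satisfies $M\geq2$. Write $P=f^{-1}(\min f)$, $Q=f^{-1}(\max f)$, so that $\{(i,j):a_{ij}=M\}=P\times Q$. The engine of the proof is the lemma: \emph{if $X\in{\mathcal E}_n$, $X\leq A$, and $X_{ij}=M$ for some $(i,j)\in P\times Q$, then $X=A$.} Indeed, for each $k$ the inequality $X_{ik}+X_{kj}\geq X_{ij}=M$ combined with $X_{ik}\leq a_{ik}$, $X_{kj}\leq a_{kj}$ and $a_{ik}+a_{kj}=M$ forces $X_{ik}=a_{ik}$, $X_{kj}=a_{kj}$, i.e.\ the $i$th row and $j$th column of $X$ agree with those of $A$; then $X_{pq}+X_{qj}\geq X_{pj}$ forces $X_{pq}\geq a_{pq}$ for all remaining $(p,q)$, whence $X=A$. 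On the other hand, for each threshold the matrix obtained from $A$ by decreasing by $1$ every value of $f$ lying above that threshold is exponent, lies strictly below $A$, and the join of these matrices equals $A$ off $P\times Q$ and equals $M-1$ on $P\times Q$. Combining the two facts, $D=A$ off $P\times Q$ and $D=M-1$ on $P\times Q$; in particular $M=1+\max D$ is read off from $A^{\Downarrow}$, and $A$ is obtained from $D$ by raising by $1$ each entry indexed by $P\times Q$.

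Applying the same analysis to $B$ gives $B=T(q_0)$ for a non-constant $g\geq0$ with $\min g=0$, $\max g=M$, and $B$ obtained from $D$ by raising by $1$ the entries indexed by $P'\times Q'$ with $P'=g^{-1}(0)$, $Q'=g^{-1}(M)$. Hence every entry of $A-B$ lies in $\{-1,0,1\}$. Since $a_{ij}-a_{ji}=f(j)-f(i)$ and likewise for $B$, the function $h:=f-g$ satisfies $h(j)-h(i)=(a_{ij}-b_{ij})-(a_{ji}-b_{ji})$ for all $i,j$, so $h$ takes values in an interval of length $\leq2$, and $h(j)=h(i)$ whenever neither $(i,j)$ nor $(j,i)$ lies in $(P\times Q)\cup(P'\times Q')$. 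If the graph $\Gamma$ on $\{1,\dots,n\}$ whose edges are these pairs is connected, then $h$ is constant, and since $f$ and $g$ both have minimum $0$ the constant is $0$, so $f=g$ and $A=B$ — the desired contradiction. It remains to treat the case that $\Gamma$ is disconnected: one then checks that the four sets $P,Q,P',Q'$ are so constrained that either $f$ or $g$ takes only two values — in which case $D=cT_I$ is a scalar multiple of a block, forcing both $A$ and $B$ to equal $(c+1)T_I$ — or else the configuration is rigid enough that $\bigoplus A^{\Downarrow}=\bigoplus B^{\Downarrow}$ forces $P'\times Q'\subseteq P\times Q$, so that $B\leq A$ and hence $B\in A^{\Downarrow}=B^{\Downarrow}$, impossible. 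I expect this last configuration analysis in the disconnected case to be the main obstacle; everything else reduces to routine manipulation of~(EM2) and of the row decomposition.
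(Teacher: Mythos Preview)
Your reduction to the join-irreducible case and the explicit computation of $D=\bigoplus A^{\Downarrow}$ are correct and in fact more informative than the paper's treatment: you show that a join-irreducible $A\notin{\mathcal T}$ is a ``difference matrix'' $a_{ij}=\max(f(j)-f(i),0)$ and that $D$ is obtained from $A$ by lowering the maximal entries by~$1$ on the rectangle $P\times Q$. The paper instead proves a comparison lemma (Lemma~\ref{lem:down}) directly relating $A$ and $B$, without ever naming~$D$.

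The genuine gap is the disconnected case of your graph~$\Gamma$, which you yourself flag as ``the main obstacle''. Your sketched case analysis (``either $f$ or $g$ takes only two values, or else $P'\times Q'\subseteq P\times Q$'') is not a proof, and it is not clear it can be made into one along those lines. The good news is that the detour through~$\Gamma$ is unnecessary. Once you know $D$, argue as follows: for $i\in P$ the $i$th column of $A$ vanishes, hence so does the $i$th column of $D$ (since $(k,i)\notin P\times Q$ for any $k$); reading $D$ from the $B$-side, $D_{ki}=0$ and $M-1\geq 1$ force $(k,i)\notin P'\times Q'$ and $b_{ki}=D_{ki}=0$ for every $k$, whence $g(i)=0$ and $i\in P'$. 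Thus $P\subseteq P'$, and by symmetry $P=P'$; the same argument on rows gives $Q=Q'$. Now $A$ and $B$ are both obtained from $D$ by adding~$1$ on the \emph{same} rectangle, so $A=B$. This ``zero column/row'' step is exactly the role played by part~(3) of Lemma~\ref{lem:down} in the paper's proof, and it replaces your connectivity argument entirely.
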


Hence, a non-minimal element of ${\mathcal E}_n$ is uniquely
determined by its strict downset. This result looks interesting on
itself, but we also use it later on for studying automorphisms of
$({\mathcal E}_n, \oplus)$. The remainder of this
subsection will be devoted to the proof of Theorem
\ref{th:downset}.

So assume that $A, B\in {\mathcal E}_n$ are such that $A\not\in
{\mathcal T}$ and that $A^{\Downarrow}=B^{\Downarrow}$. We write
$A=(a_{ij})$ and $B=(b_{ij})$. Let
${\mathrm{Max}}(A^{\Downarrow})$ be the set of maximal elements of
$A^{\Downarrow}$. Clearly, $A^{\Downarrow}=B^{\Downarrow}$ if and
only if
${\mathrm{Max}}(A^{\Downarrow})={\mathrm{Max}}(B^{\Downarrow})$.

Since the $\oplus$ operation on ${\mathcal E}_n$ coincides with
the join with respect to the natural partial order, we have
$$ \oplus\{C \colon C\in {\mathrm{Max}}(A^{\Downarrow})\}\leq A$$
and thus we have that either $\oplus\{C\colon C\in
{\mathrm{Max}}(A^{\Downarrow})\}=A$ or $\oplus\{C\colon C\in
{\mathrm{Max}}(A^{\Downarrow})\}\in
{\mathrm{Max}}(A^{\Downarrow})$. In the latter case we have that
$|{\mathrm{Max}}(A^{\Downarrow})|=1$.

In the former case we have $A=\oplus\{C\colon C\in
{\mathrm{Max}}(A^{\Downarrow})\}=\oplus\{C\colon C\in
{\mathrm{Max}}(B^{\Downarrow})\}=B$. So, to prove Theorem
\ref{th:downset}, we can suppose that
$|{\mathrm{Max}}(A^{\Downarrow})|=|{\mathrm{Max}}(B^{\Downarrow})|=1$.

Consider the column decomposition of $A$. As
$|{\mathrm{Max}}(A^{\Downarrow})|=1$, no $\oplus$-sum of several
matrices which are strictly less than $A$ is equal to $A$. It
follows that at least one $\oplus$-summand in the column
decomposition of $A$ equals $A$. We thus have $S(p)=A$ for some
$p$ (and a similar statement is true for the row decomposition,
but we do not need it here). We will say that $A\in {\mathcal
T}^{\odot m}$ if $A$ can be decomposed as a product of precisely
$m$ $\odot$-factors. If $A=S(p)=T_{I_1}^{\odot k_1}\odot \dots
\odot T_{I_{l}}^{\odot k_{l}}$ with $k_1+\dots + k_l =m$ then
$A\in {\mathcal T}^{\odot m}$ and the maximal element in the $p$th
column of $A$ is $m$ (which is also the maximal element of $A$).

\begin{lemma} \label{lem:down} Let $A,B\in {\mathcal E}_n$ and $A,B\not\in {\mathcal T}$ and assume that $A^{\Downarrow}=B^{\Downarrow}$. Assume also that $|{\mathrm{Max}}(A^{\Downarrow})|=|{\mathrm{Max}}(B^{\Downarrow})|=1$.
\begin{enumerate}
\item If $a_{ij}>b_{ij}$ then $a_{ij}=b_{ij}+1$. \item
$a_{ij}>b_{ij}$ implies that $a_{ij}=m$ and $b_{ij}=m-1$.
\item $a_{ij}=0$ if and only if $b_{ij}=0$.
\end{enumerate}
\end{lemma}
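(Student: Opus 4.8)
The plan is to identify the unique maximal element $D$ of the common strict downset $A^{\Downarrow}=B^{\Downarrow}$ explicitly, computing it once from $A$ and once from $B$, and then to extract (1)--(3) by comparing the two descriptions entrywise.

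Recall from the discussion preceding the lemma that $A=S(p)$ for some $p$, so $A$ is a product of powers of the nested blocks coming from its column decomposition: write $A=T_{P_1}^{\odot k_1}\odot\cdots\odot T_{P_r}^{\odot k_r}$ with $P_1\supsetneq\cdots\supsetneq P_r$, and set $m=k_1+\cdots+k_r$, the largest entry of $A$. Since $A\notin{\mathcal T}$ we must have $m\geq 2$. From the block product one reads off directly that $a_{ij}=m$ holds exactly when $i\in P_r$ and $j\notin P_1$; denote this set of positions by $F_A=P_r\times P_1^c$. In the same way $B=T_{Q_1}^{\odot h_1}\odot\cdots\odot T_{Q_s}^{\odot h_s}$ with $Q_1\supsetneq\cdots\supsetneq Q_s$ and largest entry $m'=h_1+\cdots+h_s\geq 2$, attained exactly on $F_B=Q_s\times Q_1^c$.

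Since $A^{\Downarrow}=B^{\Downarrow}$ has a single maximal element $D$, every member of this downset lies below $D$; in particular $D\leq A$ and $D\leq B$ entrywise. Now $A'=T_{P_1}^{\odot(k_1-1)}\odot T_{P_2}^{\odot k_2}\odot\cdots\odot T_{P_r}^{\odot k_r}$ and $A''=T_{P_1}^{\odot k_1}\odot\cdots\odot T_{P_{r-1}}^{\odot k_{r-1}}\odot T_{P_r}^{\odot(k_r-1)}$ are products of blocks, hence lie in ${\mathcal E}_n$; each satisfies $A'\leq A$ and $A''\leq A$ (as $A=A'\odot T_{P_1}=A''\odot T_{P_r}$) and has largest entry $m-1$, so both belong to $A^{\Downarrow}$ and therefore $A'\oplus A''\leq D\leq A$. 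From $A=A'\odot T_{P_1}=A''\odot T_{P_r}$ the matrix $A'$ differs from $A$ only on $P_1\times P_1^c$, where it is one smaller, and $A''$ only on $P_r\times P_r^c$; using $P_r\subseteq P_1$ this forces $A'\oplus A''$ to agree with $A$ off $F_A$ and to equal $m-1$ on $F_A$. Consequently $d_{ij}=a_{ij}$ for $(i,j)\notin F_A$, while $d_{ij}\in\{m-1,m\}$ for $(i,j)\in F_A$.

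The one real point is to show $d$ is \emph{constant} on $F_A$; I would do this by contradiction. Suppose $d_{i_1j_1}=m-1$ and $d_{i_2j_2}=m$ with both positions in $F_A=P_r\times P_1^c$. The nesting of the $P_t$ yields $a_{i_2i_1}=0$ (since $i_1\in P_t$ for every $t$) and $a_{j_1j_2}=0$ (since $j_1\notin P_t$ for every $t$); as neither $(i_2,i_1)$ nor $(j_1,j_2)$ lies in $F_A$, also $d_{i_2i_1}=d_{j_1j_2}=0$. Condition (EM2) for $D$ along $i_2,j_1,j_2$ then gives $d_{i_2j_1}\geq d_{i_2j_2}-d_{j_1j_2}=m$, while along $i_2,i_1,j_1$ it gives $d_{i_2j_1}\leq d_{i_2i_1}+d_{i_1j_1}=m-1$, a contradiction (in the degenerate cases $i_1=i_2$ or $j_1=j_2$ one of these two bounds is trivial and the argument is the same). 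Hence $d$ is constant on $F_A$, and since $D\lneq A$ must drop strictly somewhere --- necessarily inside $F_A$ --- this constant is $m-1$. Thus $D$ is exactly $A$ with each entry equal to $m$ lowered by one, and, by the symmetric argument, exactly $B$ with each entry equal to $m'$ lowered by one. Comparing the two descriptions position by position now finishes the proof: off $F_A\cup F_B$ we get $a_{ij}=d_{ij}=b_{ij}$; on $F_A\setminus F_B$ we get $a_{ij}=m$ and $b_{ij}=d_{ij}=m-1$; on $F_B\setminus F_A$ symmetrically $b_{ij}=m'$ and $a_{ij}=m'-1$; and on $F_A\cap F_B$ we get $m-1=d_{ij}=m'-1$, so $a_{ij}=b_{ij}$. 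Then (2) is immediate, since $a_{ij}>b_{ij}$ can occur only on $F_A\setminus F_B$, where $a_{ij}=m$ and $b_{ij}=m-1$; (1) follows at once; and for (3), if $a_{ij}=0$ then $(i,j)\notin F_A$ because $m\geq2$ and $(i,j)\notin F_B$ because otherwise $a_{ij}=m'-1\geq1$, whence $b_{ij}=d_{ij}=0$, the converse being symmetric.
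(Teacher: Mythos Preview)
Your proof is correct and takes a genuinely different route from the paper's. The paper argues each of (1)--(3) by a direct contradiction: for (1), if $a_{ij}\geq b_{ij}+2$ it removes a single block factor $T_{I_t}$ (with $i\in I_t$, $j\notin I_t$) from $A=S(p)$ to produce a matrix in $A^{\Downarrow}\setminus B^{\Downarrow}$; for (2), if $a_{ij}>b_{ij}$ but $a_{ij}<m$ it removes an entire factor $T_{I_t}^{\odot k_t}$ that is zero at $(i,j)$; (3) then follows from (2) and $m\geq 2$. There is no attempt to identify the maximal element $D$.

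Your approach is more structural: you pin down $D$ explicitly as ``$A$ with every maximal entry lowered by one'' (and symmetrically for $B$), the crux being the (EM2) argument that $D$ is constant on $F_A=P_r\times P_1^c$. This yields a complete entrywise dictionary between $A$ and $B$ via $D$, from which (1)--(3) drop out at once. The paper's argument is shorter and needs no such auxiliary claim, but your version proves strictly more (the exact shape of $D$, and that $A$ and $B$ agree off $F_A\cup F_B$), which is pleasant and could shorten the subsequent proof of Theorem~\ref{th:downset}. Both rely on the same ingredients: the column decomposition $A=S(p)$ and the ability to peel off block factors while staying in~$\mathcal{E}_n$.
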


\begin{proof} (1) Assume $a_{ij}\geq b_{ij}+2$. Assume that $A=S(p)=T_{I_1}^{\odot k_1}\odot \dots \odot T_{I_{l}}^{\odot k_{l}}$, where  $I_1\supset \dots \supset I_{l}$ (the inclusions are strict) and $k_1+\dots + k_l\geq 2$ as $A$ is not minimal. Let $I_t$ be such that $i\in I_t$ and consider the matrix obtained from $S(p)$ by removing one factor $I_t$. The obtained matrix belongs to ${\mathcal E}_n$, and its $ij$-entry equals $a_{ij}-1$. Hence, the obtained matrix is strictly less than $A$ and is not less than $B$ which contradicts the assumption that $A^{\Downarrow}=B^{\Downarrow}$. It follows that $a_{ij}=b_{ij}+1$, as required.

(2) Assume $a_{ij}>b_{ij}$ but that $a_{ij}<m$. This implies that
there is some $t$ such that $i\not\in I_t$. Consider the matrix
obtained from $A=S(p)$ by removing the factor $T_{I_t}^{\odot
k_t}$. This matrix is strictly less than $A$ but its  $ij$-entry
 equals $a_{ij}>b_{ij}$.
Thus $A-T_{I_t}^{\odot k_t}$ 
belongs to $A^{\Downarrow}\setminus B^{\Downarrow},$ which
contradicts our assumption.

(3) Assume $b_{ij}=0$ and $a_{ij}\neq 0$. Then $a_{ij}>b_{ij}$
which, by (2) above, means $a_{ij}=1=m$. This is a contradiction
with $m>1$.
\end{proof}

We now complete the proof of Theorem \ref{th:downset}. Let $A,B\in
{\mathcal E}_n$, $A\neq B$ and $A,B\not\in {\mathcal T}$ and
assume that $A^{\Downarrow}=B^{\Downarrow}$. (Note that if $A\in
{\mathcal T}$ and $B\not\in {\mathcal T}$ the equality
$A^{\Downarrow}=B^{\Downarrow}$ can not hold.) We can suppose that
$|{\mathrm{Max}}(A^{\Downarrow})|=|{\mathrm{Max}}(B^{\Downarrow})|=1$.
Let $A=S(j)=T_{I_1}^{\odot k_1}\odot \dots \odot T_{I_{l}}^{\odot
k_{l}}$ with $k_1+\dots + k_l =m$ and let
 $B=S(q)=T_{J_1}^{\odot r_1}\odot \dots \odot T_{J_{s}}^{\odot r_{s}}$ with $r_1+\dots + r_s =l$ be the column decompositions of $A$ and $B$. As $B\not\geq A$ there is an index $ij$ such that $a_{ij}>b_{ij}$. By Lemma \ref{lem:down} we have $a_{ij}=m$ and $b_{ij}=m-1$. This equality and $J_1\supset \dots \supset J_{s}$ mean that $i\in J_1,\dots, J_{s-1}$ and $i\not\in J_s$ (also $r_s=1$). Let $p\in J_s$. This and $i\in J_s^c$ implies that  $b_{pi}\neq 0$. On the other hand, as $a_{ij}=m$, it follows that $i\in I_1,\dots, I_l$, so $i\not\in I_t^c$ for all $t=1,\dots,l$. It follows that $a_{ti}=0$ for all $t=1,\dots, n$. In particular $a_{pi}=0$. This, together with $b_{pi}\neq 0$, contradicts part (3) of Lemma \ref{lem:down}. This finishes the proof.

\subsection{Automorphisms of $({\mathcal E}_n, \oplus)$} Throughout this section, if not stated otherwise, we assume that $n\geq 3$. In this subsection we prove that the automorphisms of the semigroup $({\mathcal E}_n, \oplus)$ are the same as those of the semigroup $({\mathcal E}_n, \odot)$ (cf. Theorem \ref{th:aut_odot}).

\begin{theorem}\label{th:aut_oplus} Let $\varphi$ be an automorphism of $({\mathcal E}_n, \oplus)$ where $n>2$. Then $\varphi \in C_2\times {\mathcal{S}}_n$.
\end{theorem}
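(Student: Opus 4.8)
The plan is to mirror the strategy used for $({\mathcal E}_n,\odot)$ in Theorem~\ref{th:aut_odot}, but working with the order-theoretic structure that $\oplus$ carries rather than the multiplicative one. Since $A\oplus B=B$ is equivalent to $A\leq B$, an automorphism $\varphi$ of $({\mathcal E}_n,\oplus)$ is exactly an automorphism of the poset $({\mathcal E}_n,\leq)$. The first step is therefore to identify, in purely order-theoretic terms, the distinguished subsets of ${\mathcal E}_n$ that $\varphi$ must preserve. The atoms of the poset (the minimal non-zero elements) are precisely the blocks, i.e. the set ${\mathcal T}$, so $\varphi({\mathcal T})={\mathcal T}$. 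Next I would recover the two ``halves'' ${\mathcal L}=\{T_{\{i\}}\}$ and ${\mathcal C}=\{T_{\{i\}^c}\}$ inside ${\mathcal T}$ by an order-theoretic characterization: for instance, $T_{\{i\}}$ and $T_{\{i\}^c}$ are distinguished among blocks by the structure of the principal filters $\{X\in{\mathcal E}_n\colon X\geq T_I\}$, or by counting how many atoms lie below a given element of $\mathrm{Max}({\mathcal E}_n\setminus\{U\})$ — here $U=\bigoplus_i T_{\{i\}}$ plays the same pivotal role as in Section~\ref{sec:aut_odot}. Concretely, $U$ is the unique smallest element whose strict downset contains all of ${\mathcal L}$ (equivalently all of ${\mathcal C}$), so $\varphi(U)=U$, and then the join-irreducible decomposition of $U$ forces $\varphi({\mathcal L})$ to be either ${\mathcal L}$ or ${\mathcal C}$.

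Once $\varphi$ is known to permute ${\mathcal L}$ (after possibly composing with the transpose, which is an automorphism of $({\mathcal E}_n,\oplus)$), it induces a permutation $\sigma\in{\mathcal S}_n$ via $\varphi(T_{\{i\}})=T_{\{\sigma(i)\}}$, and then $\varphi(T_{\{i\}^c})=T_{\{\sigma(i)\}^c}$ follows from the order relations between singleton blocks and cosingleton blocks (e.g. $T_{\{i\}}$ and $T_{\{j\}^c}$ are incomparable iff $i=j$, or some similar incidence). Composing with the automorphism induced by $\sigma^{-1}$, I may assume $\varphi$ fixes every element of ${\mathcal T}$ and must then show $\varphi=\mathrm{id}$. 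This is where Theorem~\ref{th:downset} enters decisively: I would argue by induction on the maximal entry $m$ of $A\in{\mathcal E}_n$, or more precisely on the height of $A$ in the poset. If $A\in{\mathcal T}$ there is nothing to prove. If $A\notin{\mathcal T}$, then $A$ is determined by $A^{\Downarrow}=\{B\in{\mathcal E}_n\colon B\lneq A\}$; since $\varphi$ preserves $\lneq$, it maps $A^{\Downarrow}$ bijectively onto $\varphi(A)^{\Downarrow}$, and by the induction hypothesis $\varphi$ restricted to $A^{\Downarrow}$ is the identity (every element of $A^{\Downarrow}$ has strictly smaller height), so $\varphi(A)^{\Downarrow}=A^{\Downarrow}$. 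If moreover $\varphi(A)\notin{\mathcal T}$, Theorem~\ref{th:downset} gives $\varphi(A)=A$ immediately; and $\varphi(A)\in{\mathcal T}$ is impossible because $\varphi$ preserves ${\mathcal T}$ and $A\notin{\mathcal T}$.

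I expect two points to require the most care. The first is the order-theoretic separation of ${\mathcal L}$ from ${\mathcal C}$ and the verification that $\varphi$ preserving ${\mathcal L}$ (or mapping it to ${\mathcal C}$) forces the matching behaviour on the cosingleton blocks; the small cases $n=3,4$ may need to be checked by hand, exactly as in Lemma~\ref{lem:further}, since there the combinatorics of block-sizes degenerates. The second, and genuinely the main obstacle, is making the induction on height rigorous: one must confirm that every element of $A^{\Downarrow}$ indeed has strictly smaller height so that the inductive hypothesis applies, and one must handle the base of the induction (elements covering a single block) separately, checking directly that $\varphi$ fixes them. Once these are in place, the identification $\varphi\in\{\mathrm{id},\ \text{transpose}\}\cdot{\mathcal S}_n\cong C_2\times{\mathcal S}_n$ follows, and combined with Theorem~\ref{th:aut_odot} and the fact that both groups of automorphisms visibly contain $C_2\times{\mathcal S}_n$, we get the chain of equalities $\mathrm{Aut}({\mathcal E}_n,\odot)=\mathrm{Aut}({\mathcal E}_n,\oplus)=\mathrm{Aut}({\mathcal E}_n,\leq)=\mathrm{Aut}({\mathcal E}_n,\odot,\oplus,0)\cong{\mathcal S}_n\times C_2$ for $n>2$.
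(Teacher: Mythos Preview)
Your overall architecture matches the paper's: pass from $\oplus$-automorphisms to order-automorphisms, observe that $\varphi$ permutes the atoms ${\mathcal T}$, pin down the action on ${\mathcal L}\cup{\mathcal C}$, extend to all of ${\mathcal T}$, and then run an induction on height using Theorem~\ref{th:downset} (this last part is exactly Lemma~\ref{lem:tau}, and your version of it is fine). Two of the intermediate steps, however, are not sound as written.

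First, your proposed incidence ``$T_{\{i\}}$ and $T_{\{j\}^c}$ are incomparable iff $i=j$'' is false: distinct blocks are atoms and hence always incomparable, so this cannot match up $\sigma$ with $\tau$. Second, and more seriously, you pass from ``$\varphi$ fixes ${\mathcal L}$ and ${\mathcal C}$'' directly to ``$\varphi$ fixes every element of ${\mathcal T}$'' with no argument. For $n\geq 4$ the set ${\mathcal T}$ strictly contains ${\mathcal L}\cup{\mathcal C}$, and nothing you have established forces $\varphi(T_I)=T_I$ when $2\leq |I|\leq n-2$; without this your induction has no base case. The paper handles both issues at once: it characterises ${\mathcal L}\cup{\mathcal C}$ inside ${\mathcal T}$ by an order-theoretic invariant (Lemma~\ref{lem:solutions}: $|I|\in\{1,n-1\}$ iff $T_I\oplus T_{I'}$ dominates at most one block other than $T_I,T_{I'}$ for every $I'$), and then observes that $T_I$ is the \emph{unique} atom $S$ satisfying $S\leq\bigoplus_{i\in I}T_{\{i\}}$ yet $S\not\leq\bigoplus_{i\in I\setminus\{j\}}T_{\{i\}}$ for every $j\in I$. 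Applying $\varphi$ to this system gives $\varphi(T_I)=T_{\sigma(I)}$, and the companion characterisation via ${\mathcal C}$ forces $\sigma=\tau$. No separate small-$n$ analysis is needed here, contrary to your expectation.
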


For $A,B\in {\mathcal E}_n$ we have $A\leq B$ if and only if
$A\oplus B=B$. Therefore, for $\varphi \in
{\mathrm{Aut}}({\mathcal E}_n, \oplus)$ we have that $A\leq B$ if
and only if $\varphi(A)\leq \varphi(B)$. In other words, an
automorphism of $({\mathcal E}_n, \oplus)$ is an
order-automorphism of $({\mathcal E}_n, \leq)$. And conversely,
since $A\oplus B$ is the join of $A$ and $B$ with respect to
$\leq$, it follows that any order-automorphism of ${(\mathcal
E}_n,\leq)$ is an automorphism of $({\mathcal E}_n, \oplus)$. This
observation will be important in what follows and will be used
without further mention.

Again  the case $n=2$ is very easy. Indeed, since
$\varphi\in {\rm Aut}(\mathcal{E}_2)$ preserves the partial order,
it preserves the set of minimal matrices $\{ e_{12},\, e_{21}\}$,
and consequently, $\varphi$ is either the identity map, or the
transposition of matrices.

Throughout this section, if not stated otherwise, we assume that
$n\geq 3$.

\begin{lemma}\label{lem:tau} Let $\varphi, \psi\in {\mathrm{Aut}}({\mathcal E}_n, \oplus)$ are such that $\varphi(T)=\psi(T)$ for all $T\in {\mathcal T}$. Then $\varphi=\psi$.
\end{lemma}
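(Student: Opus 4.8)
The plan is to show that the blocks $\mathcal{T}$ generate $\mathcal{E}_n$ under the operation $\oplus$ alone, together with a little extra structure; more precisely, I would show that every element of $\mathcal{E}_n$ is determined by the set of blocks below it, in a way that is visibly preserved by any automorphism of $(\mathcal{E}_n,\oplus)$. First I would recall that an automorphism of $(\mathcal{E}_n,\oplus)$ is the same thing as an order-automorphism of $(\mathcal{E}_n,\leq)$, since $\oplus$ is the join for $\leq$; this was observed just above. Consequently both $\varphi$ and $\psi$ preserve the relation $\lneq$ and hence carry strict downsets to strict downsets: $\varphi(A^{\Downarrow}) = \varphi(A)^{\Downarrow}$, and similarly for $\psi$.

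Now I would argue by induction on the "height" of $A\in\mathcal{E}_n$ in the poset, say on $\#(A)$ (the sum of entries), or equivalently on the length $m$ of a chain below $A$. The base case is $A\in\mathcal{T}$, where equality holds by hypothesis; note also that $\varphi$ and $\psi$ must both map $\mathcal{T}$ (the set of minimal nonzero elements, by the minimality clause of Theorem \ref{th:structure_th}) to itself, and they agree there. For the inductive step take $A\notin\mathcal{T}$. The induction hypothesis gives $\varphi(C)=\psi(C)$ for every $C\in A^{\Downarrow}$, hence $\varphi(A^{\Downarrow})=\psi(A^{\Downarrow})$ as sets, and therefore $\varphi(A)^{\Downarrow}=\psi(A)^{\Downarrow}$. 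By Theorem \ref{th:downset}, a matrix outside $\mathcal{T}$ is uniquely determined by its strict downset; since $\varphi(A)$ and $\psi(A)$ both lie outside $\mathcal{T}$ (as $\varphi,\psi$ preserve the minimal set $\mathcal{T}$ and $A\notin\mathcal{T}$) and have the same strict downset, we conclude $\varphi(A)=\psi(A)$. This closes the induction and proves $\varphi=\psi$ on all of $\mathcal{E}_n$.

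The only point requiring a little care is making the induction well-founded: the poset $(\mathcal{E}_n,\leq)$ is infinite, so I would phrase the induction on $\#(A)\in\mathbb{N}^0$, which is a nonnegative integer, strictly decreasing along $\lneq$, and bounded below; every element of $A^{\Downarrow}$ has strictly smaller value of $\#$, so the induction hypothesis applies. I expect the main (minor) obstacle to be the bookkeeping that $\varphi$ and $\psi$ genuinely preserve $\mathcal{T}$ and that "$A\notin\mathcal{T}$" is therefore inherited by $\varphi(A)$ — this is where we use that $\mathcal{T}$ is exactly the set of atoms of the poset, which is immediate from the minimality part of the Structure Theorem. Everything else is a direct appeal to Theorem \ref{th:downset}.
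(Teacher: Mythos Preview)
Your proposal is correct and follows essentially the same approach as the paper: an induction on a height-like invariant (you use $\#(A)$, the paper uses the maximal chain length $h(A)$), with the inductive step powered by Theorem~\ref{th:downset}. The only cosmetic difference is that the paper phrases the inductive step contrapositively (assume $\varphi(A)\neq\psi(A)$, deduce the downsets differ, and derive a contradiction via $\varphi^{-1}$), whereas you argue directly that $\varphi(A)^{\Downarrow}=\psi(A)^{\Downarrow}$ and then invoke the theorem; both are the same argument.
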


\begin{proof} For $A\in {\mathcal E}_n$ let $h(A)$ be the biggest integer $k$ such that there exist $A_1,\dots, A_k\in {\mathcal E}_n$ such that $A_1\lneq A_2 \lneq \dots \lneq A_k=A$. Applying $\varphi$, it follows that $h(\varphi(A))\geq h(A)$. As $A=\varphi^{-1}\varphi(A)$, we similarly obtain the opposite inequality, whence $h(\varphi(A))=h(A)$.

We prove the statement of the lemma by induction on $h(A)$. Notice
that $h(A)=1$ if and only if $A\in {\mathcal T}$ and the equality
$\varphi(A)=\psi(A)$ for $A\in {\mathcal T}$ holds by the
assumption of the lemma. We assume that $\varphi(A)=\psi(A)$ for
any $A$ with $h(A)\leq t$, where $t\geq 1$, and prove that
$\varphi(A)=\psi(A)$ for any $A$ with $h(A)=t+1$. Assume that
$\varphi(A)\neq \psi(A)$. Since $h(\varphi(A))=h(\psi(A))=t+1$, it
follows from Theorem \ref{th:downset} that
$\varphi(A)^{\Downarrow}\neq \psi(A)^{\Downarrow}$. Thus, without
loss of generality, there is $C\in {\mathcal E}_n$ satisfying
$C\lneq \varphi(A)$ but $C\not\leq \psi(A)$. Hence,
$\varphi^{-1}(C)\lneq A$ and $\varphi^{-1}(C)\not\leq
\varphi^{-1}\psi(A)$. Observe that $h(\varphi^{-1}(C))\leq t$ and
thus, by the inductive assumption, we have
$C=\varphi\varphi^{-1}(C)=\psi\varphi^{-1}(C)$. Hence,
$\psi^{-1}(C)=\varphi^{-1}(C)$ whence $\psi^{-1}(C) \lneq A$, and
applying $\psi$, we get $C\lneq \psi(A)$, which contradicts our
assumption on $C$ that $C\not\leq \psi(A)$. Therefore, we have
proved that $\varphi(A)=\psi(A)$.
\end{proof}

Let $I_1,I_2\subseteq \{1,\dots, n\}$,  $|I_1|,|I_2|, \in
\{1,\dots, n-1\}$. If the inequality
 \begin{equation}\label{eq:solution}
 T_J\leq T_{I_1}\oplus T_{I_2}
 \end{equation} holds for some $J\subseteq \{1,\dots, n\}$ with $|J|\in \{1,\dots, n-1\}$ we say that
 $T_J$ is a {\em solution} of \eqref{eq:solution} We say that a solution $T_J$ of \eqref{eq:solution} is {\em proper} if
 $J\neq I_1$ and $J\neq I_2$.

 \begin{lemma}\label{lem:solutions} Let $I_1,I_2\subseteq \{1,\dots, n\}$ and $|I_1|,|I_2|, \in \{1,\dots, n-1\}$. Then
 \begin{enumerate}
 \item If $|I_1\cap I_2| \in \{1,\dots, n-1\}$ then $T_{I_1\cap I_2}$ is a solution of \eqref{eq:solution}.
 \item If $|I_1\cup I_2| \in \{1,\dots, n-1\}$ then $T_{I_1\cup I_2}$ is a solution of \eqref{eq:solution}.
\item If  $T_J$ is a proper solution of \eqref{eq:solution} then
either $J=I_1\cap I_2$ of $J=I_1\cup I_2$.
 \end{enumerate}
 \end{lemma}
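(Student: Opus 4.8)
\textbf{Proof plan for Lemma \ref{lem:solutions}.}
The plan is to compute entrywise. For a proper subset $I$ the matrix $T_I=(t_{ij})$ has $t_{ij}=1$ precisely when $i\in I$ and $j\in I^c$, so $(T_{I_1}\oplus T_{I_2})_{ij}=1$ if and only if ($i\in I_1$ and $j\in I_1^c$) or ($i\in I_2$ and $j\in I_2^c$), and it is $0$ otherwise. Thus $T_J\leq T_{I_1}\oplus T_{I_2}$ is equivalent to the implication: $i\in J$ and $j\notin J$ $\Rightarrow$ ($i\in I_1$, $j\notin I_1$) or ($i\in I_2$, $j\notin I_2$), for all $i\neq j$; equivalently $J\times J^c\subseteq (I_1\times I_1^c)\cup(I_2\times I_2^c)$ as subsets of the off-diagonal positions.

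For part (1), take $J=I_1\cap I_2$ (assumed proper). If $i\in J$ and $j\in J^c$ then $i\in I_1$ and $i\in I_2$; since $j\notin I_1\cap I_2$, $j$ lies outside $I_1$ or outside $I_2$, so $(i,j)\in I_1\times I_1^c$ or $(i,j)\in I_2\times I_2^c$. Hence the inclusion holds and $T_J$ is a solution. For part (2), take $J=I_1\cup I_2$ (assumed proper). If $i\in J$ and $j\in J^c$ then $j\notin I_1$ and $j\notin I_2$, while $i\in I_1$ or $i\in I_2$; again $(i,j)$ lies in one of the two rectangles. So $T_{I_1\cup I_2}$ is a solution.

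For part (3), suppose $T_J$ is a proper solution, so $J\neq I_1$, $J\neq I_2$, and $J\times J^c\subseteq (I_1\times I_1^c)\cup(I_2\times I_2^c)$. I would show $I_1\cap I_2\subseteq J\subseteq I_1\cup I_2$ and then argue these force $J=I_1\cap I_2$ or $J=I_1\cup I_2$. First, $J\subseteq I_1\cup I_2$: if some $i\in J\setminus(I_1\cup I_2)$, pick any $j\in J^c$ (nonempty since $J$ is proper); then $(i,j)\in J\times J^c$ but $i\notin I_1$ and $i\notin I_2$, so $(i,j)$ is in neither rectangle, a contradiction. Dually, $I_1\cap I_2\subseteq J$: if some $j\in (I_1\cap I_2)\setminus J$, pick any $i\in J$; then $(i,j)\in J\times J^c$ but $j\in I_1$ and $j\in I_2$, so $(i,j)$ is in neither rectangle, again a contradiction. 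Now suppose $J$ is strictly between, i.e.\ there exist $i_0\in J\setminus(I_1\cap I_2)$ and $j_0\in (I_1\cup I_2)\setminus J$ (otherwise $J=I_1\cap I_2$ or $J=I_1\cup I_2$ and we are done). Relabeling, say $i_0\notin I_1$; since $i_0\in J\subseteq I_1\cup I_2$ we get $i_0\in I_2$, and since $j_0\in I_1\cup I_2$, $j_0\notin J$, and $(i_0,j_0)$ must lie in $I_1\times I_1^c$ or $I_2\times I_2^c$; as $i_0\notin I_1$ it lies in the second, forcing $j_0\notin I_2$, hence $j_0\in I_1$. One then checks that running the same argument starting from $j_0\in I_1$ and an arbitrary second witness element pins down $J$ completely, leading to $J=I_1\cap I_2$ or $J=I_1\cup I_2$; the main obstacle is organizing this case analysis cleanly, and I expect the cleanest route is to prove directly that $I_1\cap I_2\subseteq J\subseteq I_1\cup I_2$ together with the rectangle-covering condition is only satisfiable at the two extremes, by testing, for a putative intermediate $i_0,j_0$, the pair $(i_0,j_0)$ against both rectangles and deriving that $J$ must coincide with $I_1$ or $I_2$ on the relevant coordinates, contradicting properness unless $J$ is one of $I_1\cap I_2$, $I_1\cup I_2$.
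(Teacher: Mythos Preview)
Your arguments for parts (1) and (2), and the sandwiching $I_1\cap I_2\subseteq J\subseteq I_1\cup I_2$ in part (3), are correct and essentially identical to the paper's. The gap is in the last step of (3): you pick witnesses $i_0\in J\setminus(I_1\cap I_2)$ and $j_0\in(I_1\cup I_2)\setminus J$, deduce (after the WLOG) that $j_0\in I_1\setminus I_2$, and then wave hands (``one then checks\ldots'', ``the main obstacle is organizing this case analysis cleanly''). As written this is not yet a proof; in particular your $j_0$ is a fixed witness, so the single conclusion $j_0\in I_1\setminus I_2$ does not by itself pin down $J$.

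The paper's clean way to finish is exactly the step you are groping for, stated once and for all: \emph{if $J\cap(I_1\setminus I_2)\neq\varnothing$, then $I_1\subseteq J$.} Proof: take $i\in J\cap(I_1\setminus I_2)$ and any $j\in I_1\setminus J$; then $(i,j)\in J\times J^c$, but $j\in I_1$ rules out $(i,j)\in I_1\times I_1^c$ and $i\notin I_2$ rules out $(i,j)\in I_2\times I_2^c$, a contradiction. (Symmetrically for $I_2$.) With this in hand a four-case split on whether $J$ meets $I_1\setminus I_2$ and/or $I_2\setminus I_1$ immediately gives $J\in\{I_1\cap I_2,\,I_1,\,I_2,\,I_1\cup I_2\}$, and properness eliminates $I_1$ and $I_2$. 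Your argument is in fact one line away from this: if instead of fixing $j_0$ you let it range over all of $(I_1\cup I_2)\setminus J$, your computation shows $(I_1\cup I_2)\setminus J\subseteq I_1\setminus I_2$, i.e.\ $I_2\subseteq J$, which is the symmetric implication.
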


 \begin{proof} (1) Let $T_{I_1\cap I_2} = (a_{ij})$, $T_{I_1}=(b_{ij})$ and $T_{I_2}=(c_{ij})$.
 (1) We need to show that if $a_{ij}=1$ then $b_{ij}+c_{ij}=1$. We have $a_{ij}=1$ if and only if $i\in I_1\cap I_2$ and $j\in (I_1\cap I_2)^c=I_1^c\cup I_2^c$. If $i\in I_1$ we have $b_{ij}=1$, if $i\in I_2$ we have $c_{ij}=1$, as needed.

 (2) is similar to (1).

 (3)   Let $T_J = (a_{ij})$, $T_{I_1}=(b_{ij})$ and $T_{I_2}=(c_{ij})$. We first show that $I_1\cap I_2\subseteq J$. If $I_1\cap I_2=\varnothing$, we are done. Otherwise, if $I_1\cap I_2\not\subseteq J$, take some $j\in  (I_1\cap I_2) \cap J^c$. Then take some $i\in J$. We have $a_{ij}=1$, but $b_{ij}=c_{ij}=0$, which contradicts $T_J\leq T_{I_1}\oplus T_{I_2}$.

 Assume that $J\cap (I_1\setminus I_2)\neq\varnothing$ and show that $J\supseteq I_1$. If the latter does not hold, we take some $i\in J\cap (I_1\setminus I_2)$ and $j\in I_1\setminus J$. We have $a_{ij}=1$, $b_{ij}=c_{ij}=0$, which again contradicts $T_J\leq T_{I_1}\oplus T_{I_2}$.

 It follows by symmetry that if $J\cap (I_2\setminus I_1)\neq\varnothing$ then $J\supseteq I_2$.

 We finally show that $J\subseteq I_1\cup I_2$. If $I_1\cup I_2=\{1,\dots, n\}$, then we are done. Otherwise, assume that $J\setminus (I_1\cup I_2)\neq \varnothing$ and take some $i\in J\setminus (I_1\cup I_2)=J\cap I_1^c\cap I_2^c$ and any $j\in J^c$. We have $a_{ij}=1$ but $b_{ij}=c_{ij}=0$, a contradiction with $T_J\leq T_{I_1}\oplus T_{I_2}$.
 \end{proof}

 Lemma \ref{lem:solutions} tells us that for any given $I_1,I_2\subseteq \{1,\dots, n\}$ with $|I_1|,|I_2| \in \{1,\dots, n-1\}$, \eqref{eq:solution} has at most two proper solutions. 

 \begin{lemma} \mbox{}
 \begin{enumerate}
 \item We have that $|I_1|\in \{1,n-1\}$ if and only if for any $I_2\subseteq \{1,\dots, n\}$ with $|I_2| \in \{1,\dots, n-1\}$ the inequality \eqref{eq:solution} has at most one proper solution.
 \item Let $\varphi\in {\mathrm{Aut}}({\mathcal E}_n, \oplus)$. Then $\varphi({\mathcal L}\cup {\mathcal C})= {\mathcal L}\cup {\mathcal C}$.
 \item Let $\varphi\in {\mathrm{Aut}}({\mathcal E}_n, \oplus)$. Then either $\varphi({\mathcal L})={\mathcal L}$ or $\varphi({\mathcal L})={\mathcal C}$ (and then, respectively, $\varphi({\mathcal C})={\mathcal C}$ or $\varphi({\mathcal C})={\mathcal L}$).
 \end{enumerate}
 \end{lemma}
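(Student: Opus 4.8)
The plan is to derive all three parts from Lemma~\ref{lem:solutions}, which pins down the proper solutions of $T_J\le T_{I_1}\oplus T_{I_2}$ to lie among $T_{I_1\cap I_2}$ and $T_{I_1\cup I_2}$, and asserts that each of these is an honest solution as soon as the corresponding set is a nonempty proper subset of $\{1,\dots,n\}$. For part~(1), I would first record that $T_{I_1\cap I_2}$ is a \emph{proper} solution exactly when $I_1$ and $I_2$ are incomparable and $I_1\cap I_2\ne\varnothing$, while $T_{I_1\cup I_2}$ is a proper solution exactly when $I_1$ and $I_2$ are incomparable and $I_1\cup I_2\ne\{1,\dots,n\}$ (the size constraints $I_1\cap I_2\subseteq I_1\subsetneq\{1,\dots,n\}$ and $\varnothing\ne I_1\subseteq I_1\cup I_2$ are automatic). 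Hence \eqref{eq:solution} has two proper solutions for some $I_2$ if and only if there is a subset $I_2$ incomparable with $I_1$ and with $I_1\cap I_2\ne\varnothing$, $I_1\cup I_2\ne\{1,\dots,n\}$. If $|I_1|=1$, incomparability forces $I_1\not\subseteq I_2$, hence $I_1\cap I_2=\varnothing$, so no such $I_2$ exists; dually, if $|I_1|=n-1$, incomparability forces $I_2\not\subseteq I_1$, hence $I_1\cup I_2=\{1,\dots,n\}$, and again none exists. Conversely, when $2\le|I_1|\le n-2$ I would exhibit one: pick $a\in I_1$ and $b\in I_1^c$ and set $I_2=(I_1\setminus\{a\})\cup\{b\}$; then $|I_2|=|I_1|$, the sets are incomparable (witnessed by $a$ and $b$), $I_1\cap I_2=I_1\setminus\{a\}\ne\varnothing$, and $I_1\cup I_2=I_1\cup\{b\}$ has size $|I_1|+1\le n-1$. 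This yields part~(1).

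For part~(2), part~(1) says that $\mathcal{L}\cup\mathcal{C}=\{T_I\in\mathcal{T}\colon |I|\in\{1,n-1\}\}$ is precisely the set of blocks $T$ such that, for every block $T'$, the join $T\oplus T'$ dominates at most one block different from $T$ and $T'$. Since the blocks are exactly the atoms of $(\mathcal{E}_n,\le)$, the order is recovered from $\oplus$ via $A\le B\iff A\oplus B=B$, and any $\varphi\in\mathrm{Aut}(\mathcal{E}_n,\oplus)$ preserves joins and atoms, this property is invariant under $\varphi$; therefore $\varphi$ maps the set of blocks enjoying it onto itself, i.e. $\varphi(\mathcal{L}\cup\mathcal{C})=\mathcal{L}\cup\mathcal{C}$.

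For part~(3), I would build the graph $\Gamma$ on vertex set $\mathcal{L}\cup\mathcal{C}$ in which distinct blocks $T,T'$ are adjacent precisely when $T\oplus T'$ admits a proper solution, and compute it via Lemma~\ref{lem:solutions}: any two distinct $T_{\{i\}},T_{\{j\}}$ in $\mathcal{L}$ are adjacent, with unique proper solution $T_{\{i,j\}}$ (a block since $n\ge 3$); dually any two distinct elements of $\mathcal{C}$ are adjacent; and $T_{\{i\}}$ is never adjacent to $T_{\{j\}^c}$, because $\{i\}\cap\{j\}^c$ and $\{i\}\cup\{j\}^c$ each equal one of $\{i\}$, $\{j\}^c$, $\varnothing$, $\{1,\dots,n\}$, none of which gives a proper solution. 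Thus $\Gamma$ has exactly the two connected components $\mathcal{L}$ and $\mathcal{C}$, which are disjoint as $n\ge 3$. Since adjacency in $\Gamma$ is again expressed purely through atoms, joins and the order, part~(2) shows $\varphi$ restricts to an automorphism of $\Gamma$; an automorphism of a graph permutes its connected components, so here $\varphi$ either fixes both $\mathcal{L}$ and $\mathcal{C}$ or interchanges them, which is exactly part~(3).

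The routine but slightly delicate point is the case analysis in part~(1): one must check that in the middle-cardinality range the two candidate sets are simultaneously proper, nonempty and not all of $\{1,\dots,n\}$ (so that they genuinely give two distinct proper solutions), and that in the boundary cases $|I_1|\in\{1,n-1\}$ one candidate always collapses to $I_1$, $I_2$, $\varnothing$ or $\{1,\dots,n\}$. A secondary, conceptual point — needed for parts~(2) and~(3) — is to phrase ``$T\oplus T'$ has a proper solution'' entirely inside the semilattice $(\mathcal{E}_n,\oplus)$ (atoms, joins, strict order) so that it transports along $\varphi$.
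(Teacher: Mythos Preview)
Your argument is correct. Parts~(1) and~(2) match the paper's proof almost verbatim: the same case analysis on $|I_1|$, the same witness $I_2=(I_1\setminus\{a\})\cup\{b\}$ in the middle range, and the same invariance reasoning to transport the characterization along $\varphi$.

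Part~(3) is where you diverge. The paper argues via the matrix $U=\bigoplus_i T_{\{i\}}$: since $\varphi(\mathcal{T})=\mathcal{T}$ one has $\varphi(U)=U$, and then $\bigoplus_i \varphi(T_{\{i\}})=U$ combined with the entry count $\#(U)=n(n-1)=\sum_i\#(\varphi(T_{\{i\}}))$ forces the images $\varphi(T_{\{i\}})$ to have pairwise disjoint supports, which in $\mathcal{L}\cup\mathcal{C}$ can only happen if they are all rows or all columns. Your route instead stays entirely within the ``proper solution'' machinery of Lemma~\ref{lem:solutions}: you build the adjacency graph on $\mathcal{L}\cup\mathcal{C}$ and show directly that $\mathcal{L}$ and $\mathcal{C}$ are its connected components (complete subgraphs with no cross-edges), so any automorphism of $(\mathcal{E}_n,\oplus)$ permutes them. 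This is a cleaner and more self-contained argument---it reuses the exact tool that proved parts~(1) and~(2) rather than introducing a separate counting device, and it makes the $\oplus$-definability of everything involved more transparent. The paper's approach is shorter once one accepts the disjoint-support step, but that step itself needs a small case check (why can a collection mixing some $T_{\{i\}}$ with some $T_{\{j\}^c}$ never have disjoint supports and still contain $n$ members?) that is left implicit there.
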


 \begin{proof} (1) Assume that $|I_1|=1$. Then we have that either $I_1\cap I_2= \varnothing$, or, otherwise, $I_1\cap I_2=I_1$. Thus $T_{I_1\cap I_2}$ can not be a proper solution of \eqref{eq:solution}.  Assume that $|I_1|=n-1$. Then we have that either $I_1\cup I_2= \{1,\dots, n\}$ or, otherwise,
 $I_2\subseteq I_1$. Thus $T_{I_1\cup I_2}$  can not be  a proper solution of \eqref{eq:solution}.

 Assume now that for any $I_2\subseteq \{1,\dots, n\}$ with $|I_2| \in \{1,\dots, n-1\}$ the inequality \eqref{eq:solution} has at most one proper solution and let us prove that $|I_1|\in \{1,n-1\}$. Assume that $|I_1|\not\in \{1,n-1\}$. Let $x,y,s,t\in \{1,\dots, n\}$ be such that $x\neq y$, $s\neq t$, $x,y\in I_1$ and $s,t\in I_1^c$.  Let $I_2=(I_1\setminus \{x\})\cup \{s\}$. In this case we have $T_{I_1\cup I_2}$ and $T_{I_1\cap I_2}$ are two proper solutions of \eqref{eq:solution}.

(2) follows from (1) because ${\mathcal L}=\{T_I\colon |I|=1\}$
and  ${\mathcal C}=\{T_I\colon |I|=n-1\}$ and an automorphism
preserves the property about proper solutions given in (1).

(3) As $\varphi({\mathcal T})={\mathcal T}$, we have
$\varphi(U)=\varphi(\oplus_{T\in {\mathcal T}} T)= \oplus_{T\in
{\mathcal T}} T=U$. Now, from $\oplus_{i_1}^n T_{\{i\}}=U$, we
have $\oplus_{i=1}^n\varphi (T_{\{i\}})=U$. As $\#(U)=n(n-1)$ and
$\#(A)=n-1$ for any $A\in {\mathcal L}\cup {\mathcal C}$, no two
of the matrices $\varphi (T_{\{i\}})$ above can have overlapping
occurances of $1$. It follows that the set of all $\varphi
(T_{\{i\}})$, $1\leq i\leq n$, is either ${\mathcal L}$ or
${\mathcal C}$.
 \end{proof}

 Let $\varphi\in {\mathrm{Aut}}({\mathcal E}_n, \oplus)$. We assume that $\varphi({\mathcal L})={\mathcal L}$. By the lemma above we have that there are $\sigma,\tau\in {\mathcal S}_n$ such that $\varphi(T_{\{i\}})=T_{\{\sigma(i)\}}$ and $\varphi(T_{\{i\}^c})=T_{\{\tau(i)\}^c}$ for all $i=1,\dots, n$.

We now proceed with the proof of Theorem \ref{th:aut_oplus}. Let
$I\subseteq \{1,\dots, n\}$ be such that $|I|\in\{1,\dots, n-1\}$.
For $S\in {\mathcal T}$ consider the following sets of conditions:
\begin{equation}\label{eq:set1}
S\leq {\oplus_{i\in I} T_{\{i\}}} \text { and } S\not\leq
\oplus_{i\in I\setminus \{j\}} T_{\{i\}} \text{ for any } j\in I;
\end{equation}
\begin{equation}\label{eq:set2}
S\leq \oplus_{i\in I^c} T_{\{i\}^c} \text { and } S\not\leq
\oplus_{i\in I^c\setminus \{j\}} T_{\{i\}^c} \text{ for any } j\in
I^c.
\end{equation}
It is straightforward to verify that there is precisely one $S\in
{\mathcal T}$ which satisfies \eqref{eq:set1}: this is $S=T_I$ and
also there is precisely one $S\in {\mathcal T}$ which satisfies
\eqref{eq:set2}: this is again $S=T_I$.  Applying $\varphi$ to
sets of conditions \eqref{eq:set1} and \eqref{eq:set2}, we obtain

\begin{equation}\label{eq:set3}
\varphi(S)\leq \oplus_{i\in I} T_{\{\sigma(i)\}} \text { and }
S\not\leq \oplus_{i\in I\setminus \{j\}} T_{\{\sigma(i)\}} \text{
for any } j\in I;
\end{equation}
\begin{equation}\label{eq:set4}
\varphi(S)\leq \oplus_{i\in I^c} T_{\{\tau(i)\}^c} \text { and }
\varphi(S)\not\leq \oplus_{i\in I^c\setminus \{j\}}
T_{\{\tau(i)\}^c} \text{ for any } j\in I^c.
\end{equation}

By uniqueness of solution of \eqref{eq:set3} and \eqref{eq:set4}
we obtain that $\varphi(S)=T_{\sigma(I)}=T_{\tau(I)}$. It follows
in particular that $\sigma(I)=\tau(I)$ for any $I\subseteq
\{1,\dots, n\}$ with $|I|\in\{1,\dots, n-1\}$. If we take $|I|=1$
this implies that $\sigma=\tau$.

It follows that if $\varphi\in {\mathrm{Aut}}({\mathcal E}_n,
\oplus)$ and $\varphi({\mathcal L})={\mathcal L}$, there is
$\sigma\in {\mathcal S}_n$ such that for all $A\in {\mathcal T}$
we have $\varphi(T)=\sigma\cdot T$. Now, it follows from Lemma
\ref{lem:tau} that $\varphi(A)=\sigma\cdot A$ for all $A\in
{\mathcal E}_n$. This completes the proof for the case where
$\varphi({\mathcal L})={\mathcal L}$. The case where
$\varphi({\mathcal L})={\mathcal C}$ is considered similarly.

\begin{corollary}\label{aut:max-plus}
 ${\mathrm{Aut}}(\mathcal{E}_n,\, \odot ) = {\mathrm{Aut}}(\mathcal{E}_n,\, \oplus ) = {\mathrm{Aut}}(\mathcal{E}_n,\, \leq ) =  {\mathrm{Aut}}(\mathcal{E}_n,\, \odot ,\oplus  ,0) \cong  S_n\times C_2,$
if $n\geq 3$. \end{corollary}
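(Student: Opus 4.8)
The plan is to deduce Corollary~\ref{aut:max-plus} from the two main automorphism theorems (Theorem~\ref{th:aut_odot} and Theorem~\ref{th:aut_oplus}) together with the faithful $C_2\times{\mathcal S}_n$-action constructed at the beginning of Section~\ref{sec:aut_odot}, essentially by assembling a chain of inclusions that is forced to be a chain of equalities. First I would record that the action of $C_2\times{\mathcal S}_n$ on $({\mathcal E}_n,\oplus,\odot,0)$ is by automorphisms of the full structure $({\mathcal E}_n,\odot,\oplus,0)$, hence in particular by automorphisms of each of $({\mathcal E}_n,\odot)$, $({\mathcal E}_n,\oplus)$ and the poset $({\mathcal E}_n,\leq)$; and that this action is faithful for $n\geq 3$ (already verified in the text via $\sigma\cdot T_{\{i\}}=T_{\{\sigma(i)\}}\neq T_{\{i\}^c}=T_{\{i\}}^t$). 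This gives an injection $C_2\times{\mathcal S}_n\hookrightarrow G$ for each of the four automorphism groups $G$ in the statement.

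Next I would observe the trivial containments among the four groups. Since $A\leq B\iff A\oplus B=B$, every automorphism of $({\mathcal E}_n,\oplus)$ is an order-automorphism of $({\mathcal E}_n,\leq)$ and conversely (this is the remark made just before Lemma~\ref{lem:tau}), so ${\mathrm{Aut}}({\mathcal E}_n,\oplus)={\mathrm{Aut}}({\mathcal E}_n,\leq)$. Moreover ${\mathrm{Aut}}({\mathcal E}_n,\odot,\oplus,0)\subseteq{\mathrm{Aut}}({\mathcal E}_n,\odot)$ and ${\mathrm{Aut}}({\mathcal E}_n,\odot,\oplus,0)\subseteq{\mathrm{Aut}}({\mathcal E}_n,\oplus)$, since an automorphism of the combined structure is in particular an automorphism with respect to each separate operation. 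Then Theorem~\ref{th:aut_odot} gives ${\mathrm{Aut}}({\mathcal E}_n,\odot)\subseteq C_2\times{\mathcal S}_n$ and Theorem~\ref{th:aut_oplus} gives ${\mathrm{Aut}}({\mathcal E}_n,\oplus)\subseteq C_2\times{\mathcal S}_n$, while the faithful action gives $C_2\times{\mathcal S}_n\subseteq{\mathrm{Aut}}({\mathcal E}_n,\odot,\oplus,0)$.

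Chaining these inclusions yields
\[
C_2\times{\mathcal S}_n\subseteq{\mathrm{Aut}}({\mathcal E}_n,\odot,\oplus,0)\subseteq{\mathrm{Aut}}({\mathcal E}_n,\odot)\subseteq C_2\times{\mathcal S}_n,
\]
and likewise
\[
C_2\times{\mathcal S}_n\subseteq{\mathrm{Aut}}({\mathcal E}_n,\odot,\oplus,0)\subseteq{\mathrm{Aut}}({\mathcal E}_n,\oplus)={\mathrm{Aut}}({\mathcal E}_n,\leq)\subseteq C_2\times{\mathcal S}_n,
\]
so all of the displayed groups coincide and are isomorphic to ${\mathcal S}_n\times C_2$ for $n\geq 3$. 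There is essentially no obstacle here beyond bookkeeping: the substantive work has already been done in proving Theorems~\ref{th:aut_odot} and~\ref{th:aut_oplus}. The only point requiring a word of care is the direction ${\mathrm{Aut}}({\mathcal E}_n,\odot)\subseteq C_2\times{\mathcal S}_n$ and ${\mathrm{Aut}}({\mathcal E}_n,\oplus)\subseteq C_2\times{\mathcal S}_n$: in each case one must note that the map $\sigma\mapsto(\sigma\cdot-)$ (and its $C_2$-twist by transposition) is a group homomorphism and is injective for $n\geq 3$, so that the conclusions "$\varphi\in C_2\times{\mathcal S}_n$" of the two theorems genuinely identify the automorphism group with $C_2\times{\mathcal S}_n$ rather than merely with a quotient; this is exactly the faithfulness already checked in the text. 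Hence the corollary follows immediately.
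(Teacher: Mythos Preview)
Your argument is correct and is exactly the reasoning the paper has in mind: the corollary is stated without proof in the paper, being an immediate consequence of Theorems~\ref{th:aut_odot} and~\ref{th:aut_oplus}, the faithful $C_2\times{\mathcal S}_n$-action described at the start of Section~\ref{sec:aut_odot}, and the equivalence ${\mathrm{Aut}}({\mathcal E}_n,\oplus)={\mathrm{Aut}}({\mathcal E}_n,\leq)$ noted before Lemma~\ref{lem:tau}. Your chain-of-inclusions bookkeeping is precisely the intended deduction.
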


Notice, that each $n\times n$-tiled order over a discrete
valuation ring $\mathcal{O}$ is conjugate by a matrix from
$\rm{GL}_n(\mathcal{O})$ to a tiled order with non-negative
exponent matrix (see, for example,~\cite{jate1, jate2}).  The set
$\rm{Tiled}(n, \mathcal{O})$ of all $n\times n$ tiles orders over
a fixed $\mathcal{O}$ is a partially ordered set with respect to
the set-theoretic inclusion $\subseteq$, which is anti-isomorphic
to $(\mathcal{E}_n,\, \leq)$. In addition, there is an
anti-isomorphism between $(\rm{Tiled}(n, \mathcal{O}),\, \cap)$
and $(\mathcal{E}_n,\, \oplus)$. Consequently, $${\mathrm{Aut}}
(\rm{Tiled}(n, \mathcal{O}),\, \subseteq) \cong
{\mathrm{Aut}}(\mathcal{E}_n,\, \leq ) \cong
 {\mathrm{Aut}} (\rm{Tiled}(n, \mathcal{O}),\, \cap) \cong {\mathrm{Aut}}(\mathcal{E}_n,\, \oplus ).
$$

\section*{Acknowlegements}
The first named author was partially supported by CNPq of Brazil proc. 305975/2013-7 and partially by Fapesp of Brazil Proc. 2015/09162-9, the third named author was partially supported by Fapesp of Brazil Proc. 2014/23853-1 and partially by ARRS grant P1-0288 (Slovenia), the rest of the authors were supported by Fapesp of Brazil Proc. 2015/16726-6, Proc. 2013/11350-2.

\end{document}